\newtheorem{assumption}{Assumption}
\newcommand{\BE}{\mathbb{E}}
\newcommand{\nx}{\nabla_{x}}
\newcommand{\ny}{\nabla_{y}}
\newcommand{\atwo}{\alpha^{2}}
\newcommand{\rl}{\|^{2}}
\newcommand{\fu}{f_{\mu}}
\newcommand{\xt}{x_{t}}
\newcommand{\yt}{y_{t}}
\newcommand{\xtt}{x_{t+1}}
\newcommand{\vt}{s_{t}}
\newcommand{\fa}{\frac{\alpha}{2}}
\newcommand{\np}{\nabla\Phi}
\begin{document}

\title{Zeroth-Order Alternating Gradient Descent Ascent Algorithms for a Class of Nonconvex-Nonconcave Minimax Problems\thanks{This work is supported by National Natural Science Foundation of China under the grants
		12071279, 12021001, 11991021 and by General Project of Shanghai Natural Science Foundation
		(No. 20ZR1420600). }}

\author{\name Zi Xu \email xuzi@shu.edu.cn \\
       \addr Department of Mathematics\\
          Shanghai University\\
       Shanghai 200444, People's Republic of China\\
       and\\
       Newtouch Center for Mathematics of Shanghai University\\
       Shanghai University\\
       Shanghai 200444, People's Republic of China
       \AND
       \name Zi-Qi Wang \email colwzq@shu.edu.cn \\
       \addr Department of Mathematics\\
       Shanghai University\\
       Shanghai 200444, People's Republic of China
	   \AND
	   \name Jun-Lin Wang \email wjl37@shu.edu.cn \\
	   \addr Department of Mathematics\\
	   Shanghai University\\
	   Shanghai 200444, People's Republic of China
		\AND
		\name Yu-Hong Dai\thanks{Corresponding author.}  \email dyh@lsec.cc.ac.cn \\
		\addr LSEC, ICMSEC, Academy of Mathematics and Systems Science,
		Chinese Academy of Sciences\\
		Academy of Mathematics and Systems Science,
		Chinese Academy of Sciences\\
		Beijing 100190, China}

\editor{My editor}

\maketitle

\begin{abstract}
In this paper, we consider a class of nonconvex-nonconcave minimax problems, i.e., NC-PL minimax problems, whose objective functions satisfy the Polyak-\L ojasiewicz (PL) condition with respect to the inner variable. We propose a zeroth-order alternating gradient descent ascent (ZO-AGDA) algorithm and a zeroth-order variance reduced alternating gradient descent ascent (ZO-VRAGDA) algorithm  for solving NC-PL minimax problem under the deterministic and the stochastic setting, respectively. The total number of function value queries to obtain an $\epsilon$-stationary point of ZO-AGDA and ZO-VRAGDA algorithm for solving NC-PL minimax problem is upper bounded by $\mathcal{O}(\varepsilon^{-2})$ and $\mathcal{O}(\varepsilon^{-3})$, respectively. To the best of our knowledge, they are the first two zeroth-order algorithms with the iteration complexity gurantee for solving NC-PL minimax problems. 
\end{abstract}


\section{Introduction}

Consider nonconvex minimax problems under both the deterministic setting, i.e.,
\begin{align}\label{min_max_problem_deterministic}
	\min_{x} \max_{y} f(x,y),
\end{align}
and the stochastic setting with the objective function being an expectation function, i.e.,
\begin{align}
	\min_{x} \max_{y} g(x,y) = \mathbb{E}_{\xi \sim \mathcal{P}}G(x,y;\xi),\label{min_max_problem_stochastic}
\end{align}
where $f(x, y), G(x,y;\xi): \mathbb{R}^{d_1}\times\mathbb{R}^{d_2}\rightarrow \mathbb{R}$ are smooth functions, possibly nonconvex in variable $x$ and nonconcave in variable $y$, $\xi$ is a random variable following an unknown distribution $\mathcal{P}$, and $\mathbb{E}$ denotes the expectation function. 

Recently, many applications such as adversarial attacks on deep neural networks (DNNs), reinforcement learning, robust training, hyperparameters tuning and bandit convex optimization in machine learning or deep learning fields~\citep{chen2017zoo,finlay2021scaleable,snoek2012practical} are nonconvex minimax optimization problems in which only the objective function but not the gradient information is available. In this paper, we consider the  same setting that the gradient of the function in the problems \eqref{min_max_problem_deterministic} and \eqref{min_max_problem_stochastic} cannot be obtained directly, and the corresponding algorithm is called zeroth-order algorithm.

There are some existing works that focus on zeroth-order algorithms for solving minimax optimization problems under the nonconvex-strongly concave setting. For example, for solving \eqref{min_max_problem_deterministic} (resp. \eqref{min_max_problem_stochastic}), Wang et al.~\citep{wang2020zerothorder} proposed two single-loop algorithms, i.e., ZO-GDA and ZO-GDMSA (resp. ZO-SGDA and ZO-SGDMSA), and the total number of calls of the zeroth-order oracle to obtain an $\epsilon$-stationary point is bounded by $\mathcal{O}(\kappa^{5}(d_{1}+d_{2}) \epsilon^{-2})$ (resp. $\mathcal{O}(\kappa^{5}(d_{1}+d_{2}) \epsilon^{-4})$) and $\mathcal{O}(\kappa(d_{1}+\kappa d_{2} \log (\epsilon^{-1})) \epsilon^{-2})$ (resp. $\mathcal{O}(\kappa(d_{1}+\kappa d_{2}\log (\epsilon^{-1})) \epsilon^{-4})$) respectively, where $\kappa$ is the condition number. For solving \eqref{min_max_problem_stochastic}, \citet{liu2020min} proposed an alternating projected stochastic gradient descent-ascent method called ZO-Min-Max which can find an $\varepsilon$-stationary point with the total complexity of  $\mathcal{O}((d_{1}+d_{2}) \varepsilon^{-6})$. There are some multi-loop algorithms combined with variance-reduced or momentum techniques are also proposed to solve \eqref{min_max_problem_stochastic}. For example, \citet{xu2020enhanced} proposed a variance reduced gradient descent ascent (ZO-VRGDA) algorithm, which achieves the total complexity of $\mathcal{O}( \kappa^{3}( d_{1}+d_{2}) \varepsilon^{-3})$, and \citet{huang2020accelerated} proposed an accelerated momentum-based descent ascent (Acc-ZOMDA) method with the total complexity of $\mathcal{O}(\kappa^{3}(d_{1}+d_{2})^{3 / 2} \epsilon^{-3})$.

Under nonconvex-concave setting, the only zeroth-order algorithm that we know is the ZO-AGP algorithm proposed by \citet{xu2021zeroth} and its iteration complexity is bounded by $\mathcal{O}(\varepsilon^{-4})$ with the number of function value estimation per iteration being bounded by $\mathcal{O}(d_{1}+d_{2})$.

\subsection{Related Works}

We give a brief review on first-order algorithms for solving minimax optimization problems. For solving convex-concave minimax optimization problems, there are many existing works. For instance,  \citet{nemirovski2004prox} proposed a mirror-prox algorithm and \citet{nesterov2007dual} proposed a dual extrapolation algorithm to solve smooth convex-concave minimax problems. An extra-gradient algorithm and an optimistic gradient descent ascent algorithm to solve bilinear and strongly convex-strongly concave minimax optimization problems were proposed in~\citep{mokhtari2020unified}, and both of them own the iteration comlexity of $\mathcal{O}( \kappa \log(1/\varepsilon) )$.  \citet{lin2020near} proposed a near optimal algorithm for solving convex-concave minimax optimization problems, which achieves a iteration complexity of $\mathcal{O}( \varepsilon^{-1} )$. For nonconvex-strongly concave minimax problems,  \citet{luo2020stochastic}  proposed a stochastic recursive gradient descent ascent (SREDA) algorithm, and the gradient complexity of which is $\tilde{\mathcal{O}}( \kappa^{3}\varepsilon^{-3} )$. For general nonconvex-concave minimax problem, many nested-loop algorithms have been proposed in \citep{rafique2018non,nouiehed2019solving,thekumparampil2019efficient,kong2019accelerated,ostrovskii2020efficient,yang2020catalyst}.  To the best of our knowledge, \citet{lin2020near} proposed an accelerated algorithms called MINIMAX-PPA, which has the best iteration complexity of $\tilde{\mathcal{O}}( \varepsilon ^{-2.5} )$ till now. Several single-loop methods were also proposed to solve the problem. GDA-type algorithms~\citep{Chambolle,Ho2016,Daskalakis17,Daskalakis18,gidel2018variational,Letcher,lin2020gradient,lu2020hybrid,pan2021an,shen2022zeroth,zhang2020single}, which run a gradient descent step on $x$ and a gradient ascent step on $y$ simultaneously at each iteration.  \citet{xu2020unified} proposed a unified single-loop alternating gradient projection (AGP) algorithm for solving nonconvex-(strongly) concave and (strongly) convex-nonconcave minimax problems, which can find an $\varepsilon$-stationary point with the gradient complexity of $\mathcal{O}( \varepsilon ^{-4} )$. 

Now we give a brief introduction about variance reduced algorithms for minimax optimization. The variance reduced technique was first proposed for sloving general minimization optimization problem, and many classical algoritms including SAGA, SVRG, SARAH, SPIDER and SpiderBoost all employ variance reduced technique \citep{defazio2014saga,reddi2016b,johnson2013accelerating,allen2016variance,allen2017natasha,nguyen2017sarah,nguyen2017stochastic,nguyen2018inexact,fang2018spider,wang2019spiderboost}. For nonconvex-strongly concave minimax optimization, several variance reduction methods have been proposed for solving minimax optimization, such as  PGSVRG \citep{rafique2018non}, the SAGA-type algorithm  \citep{wai2019variance}, and SREDA \citep{luo2020stochastic}. However, to the best of our knowledge, there are no zeroth-order algorithms utilizing variance reduction techniques for solving general nonconvex-concave minimax optimization problems.

In this paper, we consider a class of nonconvex-nonconcave minimax problems, i.e., the nonconvex-PL (NC-PL) minimax problems, for which we assume $f(x,y)$ in \eqref{min_max_problem_deterministic} and \eqref{min_max_problem_stochastic} satisfies the Polyak-\L ojasiewicz (PL) condition with respect to $y$, which is the same as in \citep{nouiehed2019solving,yang2020catalyst,yang2020global}. This condition was originally introduced by \citep{polyak1963gradient} and is proved to be weaker than strong convexity in~\citep{karimi2016linear}. The PL condition has also drawn much attention in machine learning and deep learning problems, and has been shown to hold in linear quadratic regulators \citep{fazel2018global}, as well as overparametrized neural networks~\citep{liu2020loss}. For NC-PL minimax problems, we propose a zeroth-order alternating gradient descent ascent (ZO-AGDA) algorithm and a zeroth-order variance reduced alternating gradient descent ascent (ZO-VRAGDA) algorithm  for solving  \eqref{min_max_problem_deterministic} and \eqref{min_max_problem_stochastic} with the total number of function value queries of  $\mathcal{O}(\varepsilon^{-2})$ and $\mathcal{O}(\varepsilon^{-3})$ respectively. To the best of our knowledge, they are the first two zeroth-order algorithms with the complexity gurantee for solving NC-PL minimax problems under both the deterministic and the stochastic setting. 


\section{Preliminaries}\label{section preliminaries}
\subsection{Notations}
Throughout the paper, we use the following notations. $\langle x,y \rangle$ denotes the inner product of two vectors of $x$ and $y$. $\| \cdot \|$ denotes the Euclidean norm. 
We use $\mathbb{R}^{d_{1}}$ to denote the space of $d_{1}$ dimension real valued vectors. 
$\mathbb{E}_u (\cdot)$ means the expectation over the random vector $u$, $\mathbb{E}_{(u, \xi)}(\cdot) $ means the joint expectation over the random vector $u$ and the random variable $\xi$, and  $\mathbb{E}_{(U, \mathcal{B})}$ denotes the joint expectation over the set $U$ of random vectors and the set $\mathcal{B}$ of random variables $\{\xi_{1}, \cdots, \xi_{b}\}$. Denote $\Phi(x) = \max\limits_{y} f(x,y)$, $\Phi^*=\min\limits_{x} \Phi(x)$ and $\Psi(x) = \max\limits_{y} g(x,y)$, $\Psi^*=\min\limits_{x} \Psi(x)$. 

\subsection{Zeroth-order gradient estimator}\label{UniGE}
For solving problems \eqref{min_max_problem_deterministic} and \eqref{min_max_problem_stochastic}, since the gradient information is not available directly,  we first introduce the idea of uniform smoothing gradient estimator (UniGE). Specifically, for \eqref{min_max_problem_deterministic}, the UniGE of $\nabla_{x} f(x,y)$ and $\nabla_{y} f(x,y)$ are respectively defined as
\begin{align}
	&\hat{\nabla}_{x} f(x, y)= \frac{f(x+\mu_{1} {u}, y)-f(x, y)}{\mu_{1} / d_{1}} {u}, \label{sec2:1}\\
	&\hat{\nabla}_{y} f(x, y)= \frac{f(x, y+\mu_{2} {v})-f(x, y)}{\mu_{2} / d_{2}} {v}\label{sec2:2},
\end{align}
where $\mu_{1},\mu_{2}$ are two smoothing parameters, $u\in \mathbb{R}^{d_{1}}$ and $v \in \mathbb{R}^{d_{2}}$ are random vectors that are generated from the uniform distribution over $d_{1}$-dimensional and $d_{2}$-dimensional unit sphere respectively. For convenience, for any $u\in  \mathbb{R}^{d_{1}}$ and $v\in  \mathbb{R}^{d_{2}}$, we denote
\begin{align*}
	f_{\mu_{1}}(x, y)&=\mathbb{E}_{u} f(x+\mu_{1} u, y),\\
	f_{\mu_{2}}(x, y)&=\mathbb{E}_{v} f(x, y+\mu_{2} v).
\end{align*}
Note that $\mathbb{E}_u (\hat{\nabla}_{x} f(x, y))= \nabla_{x} f_{\mu_{1}}(x, y)$ and $\mathbb{E}_v (\hat{\nabla}_{y} f(x, y))= \nabla_{y} f_{\mu_{2}}(x, y)$ by Lemma 5 in \citep{ji2019improved}.

Similarly, for \eqref{min_max_problem_stochastic}, given $\mathcal{B}=\{\xi_{1}, \cdots, \xi_{r}\}$ and $\bar{\mathcal{B}}=\{\zeta_{1}, \cdots, \zeta_{r}\}$ drawn i.i.d. from an unknown distribution $\mathcal{P}$, we respectively  define the UniGE of $\nabla_{x} G(x,y,\xi)$ and $\nabla_{y} G(x,y,\zeta)$ as
\begin{align*}
	&\hat{\nabla}_{x} G(x, y ; \mathcal{B})=\frac{1}{r} \sum_{i=1}^{r} \hat{\nabla}_{x} G(x, y ; \xi_{i})=\frac{1}{r} \sum_{i=1}^{r} \frac{G(x+\mu_{1} {u}_{i}, y ; \xi_{i})-G(x, y ; \xi_{i})}{\mu_{1} / d_{1}} {u}_{i}, \\
	&\hat{\nabla}_{y} G(x, y ; \bar{\mathcal{B}})=\frac{1}{r} \sum_{i=1}^{r} \hat{\nabla}_{y} G(x, y ; \zeta_{i})=\frac{1}{r} \sum_{i=1}^{r} \frac{G(x, y+\mu_{2} {v}_{i} ; \zeta_{i})-G(x, y ; \zeta_{i})}{\mu_{2} / d_{2}} {v}_{i},
\end{align*}
where $u_{i} \in \mathbb{R}^{d_{1}}$ and $v_{i} \in \mathbb{R}^{d_{2}}$ are random vectors generated from the uniform distribution over $d_{1}$-dimensional and $d_{2}$-dimensional unit sphere respectively. We also denote that
\begin{align*}
	g_{\mu_{1}}(x, y)&=\mathbb{E}_{(u, \xi)} G(x+\mu_{1} u, y;\xi),\\
	g_{\mu_{2}}(x, y)&=\mathbb{E}_{(v, \zeta)} G(x, y+\mu_{2} v;\zeta)
\end{align*}
for any $u\in \mathbb{R}^{d_{1}}$ and $v\in \mathbb{R}^{d_{2}}$.

Note that for any random variable $\xi$, we have $\mathbb{E}_{(u, \xi)}[\hat{\nabla}_{x} G(x, y ; \xi)]=\nabla_{x} g_{\mu_{1}}(x, y)$ and $\mathbb{E}_{(v, \zeta)}[\hat{\nabla}_{y} G(x, y ; \zeta)]=\nabla_{y} g_{\mu_{2}}(x, y)$ by Lemma 5 in \citep{ji2019improved}. Then obviously we have $\mathbb{E}_{(U, \mathcal{B})}[\hat{\nabla}_{x} G(x, y ; \mathcal{B})]=\nabla_{x} g_{\mu_{1}}(x, y)$ and $ \mathbb{E}_{(V, \bar{\mathcal{B}})}[\hat{\nabla}_{y} G(x, y ; \bar{\mathcal{B}})]=\nabla_{y} g_{\mu_{2}}(x, y)$ where $U=\{u_1,\cdots, u_r\}$ and $V=\{v_1, \cdots, v_r\}$.

Actually, there are two major zeroth-order gradient estimators that are usually used in previous exisiting works. One is the UniGE, which has also been used in some other existing works, e.g., \citep{liu2020min,bravo2018bandit,ji2019improved,huang2020accelerated,xu2021zeroth}. Another commonly used zeroth-order gradient estimator is Gaussian gradient estimator~\citep{ghadimi2016mini,yurii17random,fazel2018global}, which can not lead to a better iteration complexity than that of UniGE when used in the two proposed algorithms that shown in the following sections.

\section{A Zeroth-order Algorithm for Deterministic NC-PL Minimax Problems}\label{section ZO-AGDA}
The alternating gradient descent ascent (AGDA) algorithm is a  well-known method for solving minimax problems, and has widely been studied in \citep{Letcher,Chambolle,Daskalakis17,Daskalakis18,lin2020gradient}. ZO-AGDA algorithm is a randomized version of AGDA algorithm, and is not a new algorithm, which has been proposed in \citep{liu2020min,wang2020zerothorder}. However, the two existing zeroth-order versions of the AGDA algorithm are designed and analyzed for solving nonconvex-strongly concave minimax problems. For NC-PL minimax problems, there is no existing algorithms with complexity guarantee before. In this section, we propose a zeroth-order alternating gradient descent ascent (ZO-AGDA) algorithm for solving \eqref{min_max_problem_deterministic}, i.e., the deterministic NC-PL problem, and analyze its iteration complexity. The detailed algorithm is listed as follows.
\begin{algorithm}	
	\caption{(ZO-AGDA Algorithm)}\label{algo1}
	\begin{algorithmic}
		\item [Step 1] ~ Input $x_1,y_1,\alpha,\beta$; Set $t=1$.\\	
		\item [Step 2] ~ Perform the following update for $x_t$:
		\qquad 	\begin{equation}\label{algolstep_x}
			x_{t+1}=x_{t}-\alpha \hat{\nabla}_{x} f(x_{t}, y_{t})
		\end{equation}
		\qquad \quad with $\hat{\nabla}_{x} f(x_{t}, y_{t})$ being defined as in \eqref{sec2:1};
		\item [Step 3] ~ Perform the following update for $y_t$:  	
		\qquad  \begin{equation}\label{algolstep_y}
			y_{t+1}=y_{t}+\beta \hat{\nabla}_{y} f(x_{t+1}, y_{t})
		\end{equation}
		\qquad \quad with $\hat{\nabla}_{y} f(x_{t+1}, y_{t})$ being defined as in \eqref{sec2:2};
		\item [Step 4] ~ If converges, stop; otherwise, set $t=t+1, $ go to Step 2.
	\end{algorithmic}
\end{algorithm}
For simplicity, in the following analysis, we denote $s_{t}=\hat{\nabla}_{x} f(x_{t}, y_{t})$ and $w_{t}=\hat{\nabla}_{y} f(x_{t+1}, y_{t})$.

\subsection{Technical Preparations}

In this section, we analyze the iteration complexity of the ZO-AGDA algorithm for solving \eqref{min_max_problem_deterministic}.

Firstly, we give some mild assumptions for \eqref{min_max_problem_deterministic}.

\begin{assumption}\label{a3} 
	For any fixed $x, \max\limits_{y} f(x, y)$ has a nonempty solution set and a finite optimal value. $ f(x,y)$ satisfies the Polyak-\L ojasiewicz (PL) condition in $y$, i.e., $\forall x, y$, there exists a $\mu>0$ such that $\|\nabla_{y} f(x, y)\|^{2} \geqslant 2\mu$ $[\max\limits_{y} f(x, y)-f(x, y)]$. 
\end{assumption}
\begin{assumption}\label{a1}
	$f(x,y)$ has Lipschitz continuous gradients, i.e., there exists a constant $l>0$ such that $\forall x_{1}, x_{2} \in \mathbb{R}^{d_{1}}$, $y_{1}, y_{2} \in \mathbb{R}^{d_{2}}$,
	\begin{align*}
		\|\nabla_x f(x_{1},y_{1}) - \nabla_x f(x_{2},y_{2})\| \leqslant l[\|x_{1}-x_{2}\|+\|y_{1}-y_{2}\|],\\
		\|\nabla_y f(x_{1},y_{1}) - \nabla_y f(x_{2},y_{2})\| \leqslant l[\|x_{1}-x_{2}\|+\|y_{1}-y_{2}\|].
	\end{align*}
\end{assumption}
\begin{lemma} (Lemma 4.1(a) in \citep{gao2018information})\label{zoagda_lemma}
	\label{funconvexlip}
	If Assumption \ref{a1} holds, then $f_{\mu_{1}}(x,y)$ and $f_{\mu_{2}}(x,y)$ have Lipschitz continuous gradients, and it holds that
	\begin{align}
		&\|\nabla_{x}f_{\mu_{1}}(x,y) - \nabla_{x}f(x,y)
		\|^{2} \leqslant \frac{\mu_{1}^{2}d_{1}^{2}l^{2}}{4},\label{gradgap_x}\\ &
		\|\nabla_{y}f_{\mu_{2}}(x,y) - \nabla_{y}f(x,y)\|^{2} \leqslant \frac{\mu_{2}^{2}d_{2}^{2}l^{2}}{4},\label{gradgap}\\
		&\mathbb{E}\|\frac{d_{1}[f(x+\mu_{1}u,y) - f(x,y)]}{\mu_{1}} u\|^{2} \leqslant 2d_{1}\|\nabla_{x}f(x,y)\|^{2} + \frac{\mu_{1}^{2}d_{1}^{2}l^{2}}{2},\label{gradboundx}\\
		&\mathbb{E}\|\frac{d_{2}[f(x,y+\mu_{2}v) - f(x,y)]}{\mu_{2}} v\|^{2} \leqslant 2d_{2}\|\nabla_{y}f(x,y)\|^{2} + \frac{\mu_{2}^{2}d_{2}^{2}l^{2}}{2}.\label{gradboundy}
	\end{align}
\end{lemma}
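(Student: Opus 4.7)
The plan is to reduce all four inequalities to two elementary ingredients: a second-order Taylor remainder controlled by the Lipschitz constant $l$ from Assumption \ref{a1}, and the spherical-symmetry moment identity $\mathbb{E}_u[uu^{\top}] = I/d_1$ (with an analogous identity for $v$) that holds when $u$ is drawn uniformly from the unit sphere. The starting point is the pointwise expansion
\begin{equation*}
f(x+\mu_1 u,y)-f(x,y) = \mu_1\langle\nabla_x f(x,y),u\rangle + \rho(u),
\end{equation*}
valid for any unit vector $u$, where $|\rho(u)|\le l\mu_1^2/2$ follows by integrating $\nabla_x f(x+t\mu_1 u,y)-\nabla_x f(x,y)$ over $t\in[0,1]$ and invoking Assumption \ref{a1}.

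For the two gradient-gap estimates \eqref{gradgap_x} and \eqref{gradgap}, I would multiply the Taylor identity by $(d_1/\mu_1)u$ and take expectation. Since $\mathbb{E}_u[\hat{\nabla}_x f(x,y)]=\nabla_x f_{\mu_1}(x,y)$ as recorded in Subsection \ref{UniGE}, the left-hand side becomes $\nabla_x f_{\mu_1}(x,y)$. The linear contribution collapses to $d_1\,\mathbb{E}_u[uu^{\top}]\nabla_x f(x,y)=\nabla_x f(x,y)$ by the spherical-symmetry identity, so the gap $\nabla_x f_{\mu_1}(x,y)-\nabla_x f(x,y)$ equals $(d_1/\mu_1)\,\mathbb{E}_u[\rho(u)u]$, whose norm is bounded by $(d_1/\mu_1)\cdot(l\mu_1^2/2)=d_1 l\mu_1/2$. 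Squaring yields \eqref{gradgap_x}, and \eqref{gradgap} follows by the identical argument with $(d_2,\mu_2,v)$ in place of $(d_1,\mu_1,u)$.

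For the second-moment bounds \eqref{gradboundx} and \eqref{gradboundy}, I would apply $\|a+b\|^2\le 2\|a\|^2+2\|b\|^2$ after multiplying the Taylor expansion by $(d_1/\mu_1)u$ and using $\|u\|=1$, obtaining the pointwise estimate
\begin{equation*}
\Bigl\|\tfrac{d_1}{\mu_1}(f(x+\mu_1 u,y)-f(x,y))u\Bigr\|^2 \le 2 d_1^2\langle\nabla_x f(x,y),u\rangle^2 + \tfrac{2 d_1^2}{\mu_1^2}\rho(u)^2.
\end{equation*}
Taking expectation in $u$, the first term yields $2 d_1^2\,\nabla_x f(x,y)^{\top}\mathbb{E}_u[uu^{\top}]\nabla_x f(x,y)=2 d_1\|\nabla_x f(x,y)\|^2$ by the symmetry identity, while the residual is at most $\tfrac{2 d_1^2}{\mu_1^2}\cdot\tfrac{l^2\mu_1^4}{4}=\tfrac{d_1^2 l^2\mu_1^2}{2}$; summing proves \eqref{gradboundx}, and \eqref{gradboundy} is the mirror argument. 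The main obstacle is purely bookkeeping of the dimension factors: the identity $\mathbb{E}[uu^{\top}]=I/d_1$ must be applied in exactly the right places so that the linear term produces a single $d_1$ (giving the constant $2d_1$) while the remainder retains $d_1^2$ (giving the constant $d_1^2 l^2\mu_1^2/2$), matching the stated bounds precisely.
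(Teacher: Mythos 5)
Your argument is correct as far as it goes, but note that the paper does not prove this lemma at all: it is imported verbatim as Lemma 4.1(a) of Gao, Jiang and Zhang \cite{gao2018information}, so there is no in-paper proof to match. What you have written is essentially the standard derivation underlying that cited result, and the computations check out: the Taylor remainder bound $|\rho(u)|\leqslant l\mu_1^2/2$ follows from Assumption \ref{a1} by integrating along the segment, the sphere moment identity $\mathbb{E}_u[uu^{\top}]=I/d_1$ turns the linear term into exactly $\nabla_x f(x,y)$ (respectively $\tfrac{1}{d_1}\|\nabla_x f(x,y)\|^2$ in the squared version), and the constants $\tfrac{d_1^2l^2\mu_1^2}{4}$, $2d_1$, and $\tfrac{d_1^2l^2\mu_1^2}{2}$ all come out as stated. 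Two small caveats. First, your proof of \eqref{gradgap_x} leans on the unbiasedness identity $\mathbb{E}_u[\hat{\nabla}_x f(x,y)]=\nabla_x f_{\mu_1}(x,y)$, which the paper itself only records by citation to Lemma 5 of \cite{ji2019improved}; this is legitimate to use, but it is where the ball-versus-sphere bookkeeping (the estimator uses the sphere while $f_{\mu_1}$ is the ball average) is actually hidden, so your proof is self-contained only modulo that imported fact. Second, you do not address the first clause of the lemma, that $f_{\mu_1}$ and $f_{\mu_2}$ themselves have Lipschitz continuous gradients; this is a one-line consequence of Jensen's inequality applied to $\nabla f_{\mu_1}(x,y)=\mathbb{E}_w[\nabla f(x+\mu_1 w,y)]$ together with Assumption \ref{a1}, but it should be stated for completeness since inequality \eqref{sec3:7} and the surrounding analysis use it.
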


\begin{lemma} (Lemma A.5 in \citep{nouiehed2019solving}) \label{philip}
	If Assumptions \ref{a3} and \ref{a1} hold, then  $\nabla\Phi(x) = \nabla_{x}f(x,y^{*}(x))$, for any $y^{*}(x) \in \underset{y}{\arg\max} f(x,y)$, and $\Phi(\cdot)$ is $L$-smooth with $L:=l+\frac{l^{2}}{2\mu}$.
\end{lemma}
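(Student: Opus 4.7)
The plan is to split the lemma into its two assertions and handle them in a coupled fashion: first the envelope identity $\nabla\Phi(x) = \nabla_x f(x, y^*(x))$, then the $L$-smoothness. The second part will actually furnish the quantitative estimate on the argmax set needed to close the first part, so I would set up both arguments in parallel and invoke the quantitative PL bound at the right moment.

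For the envelope identity, fix a direction $d$ and any $y^*(x) \in \arg\max_y f(x,y)$. The lower bound is immediate: $\Phi(x+td) \geq f(x+td, y^*(x))$ combined with smoothness of $f$ in its first argument gives $\liminf_{t\downarrow 0} t^{-1}[\Phi(x+td)-\Phi(x)] \geq \langle \nabla_x f(x, y^*(x)), d\rangle$. For the matching upper bound I write $\Phi(x+td)-\Phi(x) \leq f(x+td, y^*(x+td)) - f(x, y^*(x+td))$ and need to show that $y^*(x+td)$ stays within $O(t)$ of $\arg\max_y f(x,y)$. This is where the PL condition enters: it implies a quadratic-growth inequality for $\Phi(\cdot) - f(\cdot, \cdot)$ in the Hausdorff distance to the argmax set, and feeding in the estimate derived in the smoothness step below forces continuity of the argmax correspondence. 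With that in hand, continuity of $\nabla_x f$ gives the matching upper bound, so $\nabla\Phi(x)$ exists and equals $\nabla_x f(x, y^*(x))$ irrespective of the choice of maximizer.

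For $L$-smoothness, fix $x_1, x_2$ and $y_1^* \in \arg\max_y f(x_1, y)$. Optimality gives $\nabla_y f(x_1, y_1^*)=0$, so Assumption \ref{a1} yields $\|\nabla_y f(x_2, y_1^*)\| \leq l\|x_1-x_2\|$. Applying the PL condition from Assumption \ref{a3} at $(x_2, y_1^*)$ then produces
\begin{equation*}
\Phi(x_2) - f(x_2, y_1^*) \;\leq\; \frac{\|\nabla_y f(x_2, y_1^*)\|^{2}}{2\mu} \;\leq\; \frac{l^{2}}{2\mu}\|x_1 - x_2\|^{2}.
\end{equation*}
Combining this with the standard descent inequality for $f(\cdot, y_1^*)$ under Assumption \ref{a1}, and using $\Phi(x_1) = f(x_1, y_1^*)$ together with $\nabla\Phi(x_1) = \nabla_x f(x_1, y_1^*)$ from the envelope identity, delivers a quadratic upper bound on $\Phi(x_2) - \Phi(x_1) - \langle \nabla\Phi(x_1), x_2-x_1\rangle$ with the constant $L = l + l^{2}/(2\mu)$ as claimed.

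The main obstacle is the envelope identity, because without strong concavity $y^*(x)$ is generally not unique and the classical implicit-function or Danskin-style arguments do not apply directly. The workaround is precisely the PL-induced quadratic-growth bound together with the one-sided estimate proved in the smoothness step, which jointly control the Hausdorff displacement of the argmax set under perturbation of $x$; this is the route taken in Lemma A.5 of \cite{nouiehed2019solving}, which the present lemma simply quotes.
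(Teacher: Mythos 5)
First, a point of reference: the paper does not prove this lemma at all --- it is quoted directly as Lemma A.5 of \cite{nouiehed2019solving} --- so there is no in-paper proof to compare against, and your sketch must be judged against the standard argument. Your smoothness step \emph{is} that argument and is the heart of the matter: $\nabla_y f(x_1,y_1^*)=0$ by unconstrained optimality, Assumption \ref{a1} gives $\|\nabla_y f(x_2,y_1^*)\|\leqslant l\|x_1-x_2\|$, and the PL inequality at $(x_2,y_1^*)$ gives $\Phi(x_2)-f(x_2,y_1^*)\leqslant\frac{l^2}{2\mu}\|x_1-x_2\|^2$. However, your Danskin-style detour for the envelope identity is both heavier than necessary and not quite closed as sketched: continuity of the argmax correspondence only yields the directional derivative $\Phi'(x;d)=\max_{y^*}\langle\nabla_x f(x,y^*),d\rangle$ over the argmax set, and to upgrade this to a gradient equal to $\nabla_x f(x,y^*(x))$ for an \emph{arbitrary} maximizer you still must show $\nabla_x f(x,\cdot)$ is constant on $\arg\max_y f(x,y)$. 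The clean route is to note that your own smoothness computation, combined with $\Phi(x_2)\geqslant f(x_2,y_1^*)$ and the descent lemma for $f(\cdot,y_1^*)$, already gives the two-sided sandwich
\begin{equation*}
-\frac{l}{2}\|x_2-x_1\|^2\;\leqslant\;\Phi(x_2)-\Phi(x_1)-\langle\nabla_x f(x_1,y_1^*),x_2-x_1\rangle\;\leqslant\;\Bigl(\frac{l}{2}+\frac{l^2}{2\mu}\Bigr)\|x_2-x_1\|^2
\end{equation*}
for \emph{every} $y_1^*\in\arg\max_y f(x_1,y)$. Letting $x_2\to x_1$ proves differentiability of $\Phi$ with $\nabla\Phi(x_1)=\nabla_x f(x_1,y_1^*)$ for every such $y_1^*$ simultaneously (uniqueness of the derivative), so the envelope identity is a corollary of the smoothness step and no argmax-continuity argument is needed.

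Second, the constant bookkeeping does not deliver what you claim. The coefficient of $\|x_1-x_2\|^2$ in the upper bound is $\frac{l}{2}+\frac{l^2}{2\mu}=\frac{1}{2}\bigl(l+\frac{l^2}{\mu}\bigr)$, i.e.\ $L=l+\frac{l^2}{\mu}$ under the convention $\frac{L}{2}\|\cdot\|^2$ that the paper uses when it invokes this lemma, not $L=l+\frac{l^2}{2\mu}$. This factor of $2$ in the second term cannot be recovered: for $f(x,y)=lxy-\frac{\mu}{2}y^2$ with $\mu\leqslant l$, Assumptions \ref{a3} and \ref{a1} hold with these exact constants, yet $\Phi(x)=\frac{l^2}{2\mu}x^2$ has gradient Lipschitz constant $\frac{l^2}{\mu}$, which exceeds $l+\frac{l^2}{2\mu}$ whenever $l>2\mu$. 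The discrepancy is inherited from the cited lemma and is harmless for the $\mathcal{O}(\epsilon^{-2})$ rate, but the assertion that your computation ``delivers the constant $L=l+l^2/(2\mu)$ as claimed'' is not correct as written.
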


\begin{lemma}\label{g smooth}
	If Assumptions \ref{a3} and \ref{a1} hold, then for any given $x$ and $y$, we have 
	\begin{align}\label{phigap}
		\|\nabla_{x} f(x, y)-\nabla \Phi(x)\|^{2} \leqslant \kappa^{2}\|\nabla_{y} f(x, y)\|^{2},
	\end{align}
	where $\kappa=l/\mu$.
\end{lemma}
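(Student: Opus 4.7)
The plan is to connect the gap between $\nabla_x f(x,y)$ and $\nabla\Phi(x)$ to the distance from $y$ to the optimal set in $y$, and then convert that distance bound back into a bound on $\|\nabla_y f(x,y)\|$ via the PL condition.

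First, I would pick a specific maximizer. By Assumption~\ref{a3}, the set $Y^*(x) := \arg\max_{y'} f(x,y')$ is nonempty, so I can let $y_p^*(x)$ denote the projection of $y$ onto $Y^*(x)$ (or any minimizer of $\|y-y'\|$ over $y' \in Y^*(x)$). Then Lemma~\ref{philip} gives $\nabla\Phi(x) = \nabla_x f(x, y_p^*(x))$. Using the Lipschitz continuity of $\nabla_x f$ from Assumption~\ref{a1}, I obtain
\begin{align*}
\|\nabla_x f(x,y) - \nabla\Phi(x)\|
= \|\nabla_x f(x,y) - \nabla_x f(x, y_p^*(x))\|
\leqslant l\,\|y - y_p^*(x)\|.
\end{align*}

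The key remaining step is to show the quadratic-growth-type bound
\begin{align*}
\|y - y_p^*(x)\|^2 \leqslant \frac{1}{\mu^2}\|\nabla_y f(x,y)\|^2.
\end{align*}
For this I would use the known fact (Karimi, Nutini, and Schmidt, 2016) that, for a function with Lipschitz gradient, the PL condition with constant $\mu$ implies quadratic growth with the same constant: $\Phi(x) - f(x,y) \geqslant \tfrac{\mu}{2}\|y - y_p^*(x)\|^2$. Combining this with the PL inequality $\|\nabla_y f(x,y)\|^2 \geqslant 2\mu[\Phi(x) - f(x,y)]$ from Assumption~\ref{a3} gives exactly $\mu^2\|y - y_p^*(x)\|^2 \leqslant \|\nabla_y f(x,y)\|^2$. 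Squaring the Lipschitz bound and substituting then yields
\begin{align*}
\|\nabla_x f(x,y) - \nabla\Phi(x)\|^2
\leqslant l^2\|y - y_p^*(x)\|^2
\leqslant \frac{l^2}{\mu^2}\|\nabla_y f(x,y)\|^2
= \kappa^2\|\nabla_y f(x,y)\|^2.
\end{align*}

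The main obstacle is the PL$\Rightarrow$quadratic-growth step, since this is not proved in the excerpt. If citing Karimi et al.\ is undesirable, I would supply a short self-contained proof via the gradient-flow trick: along the ascent flow $\dot y = \nabla_y f(x,y)$, the PL condition implies $\tfrac{d}{dt}\sqrt{\Phi(x) - f(x,y(t))} \leqslant -\sqrt{\mu/2}\,\|\nabla_y f(x,y(t))\|$, and integrating from $y(0)=y$ to $y(\infty)\in Y^*(x)$ together with $\|y(\infty)-y\| \leqslant \int_0^\infty \|\dot y\|\,dt$ gives $\sqrt{\Phi(x)-f(x,y)} \geqslant \sqrt{\mu/2}\,\|y - y_p^*(x)\|$, i.e., the required quadratic growth. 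Everything else is routine application of the two assumptions and the previous lemma.
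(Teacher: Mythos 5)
Your proposal is correct and follows essentially the same route as the paper: both write $\nabla\Phi(x)=\nabla_x f(x,\hat y^*(x,y))$ for the nearest maximizer, apply the Lipschitz continuity of $\nabla_x f$ to get $\|\nabla_x f(x,y)-\nabla\Phi(x)\|\leqslant l\,\|y-\hat y^*(x,y)\|$, and then bound the distance to the argmax set by $\frac{1}{\mu}\|\nabla_y f(x,y)\|$. The only difference is that the paper invokes this last error-bound inequality directly from Karimi et al.\ (their PL\,$\Leftrightarrow$\,EB equivalence), whereas you rederive it by chaining PL with the PL\,$\Rightarrow$\,quadratic-growth implication from the same reference, which is an equally valid (if slightly longer) way to reach the identical estimate.
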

\begin{proof}
	If Assumptions \ref{a3} and \ref{a1} hold, then by Theorem 1 in \citep{karimi2016linear}, for any given $x$, and $\forall y$,
	\begin{equation}
		\Vert \nabla_{y} f(x,y)\Vert \geqslant \mu \Vert \hat{y}^{*}(x,y)-y\Vert, \label{add:1}
	\end{equation}
	where $\hat{y}^{*}(x,y)=\arg\min \{\|y-y^{*}(x)\| \mid y^{*}(x) \in \underset{y}{\arg\max} f(x,y)\}$. Then, by Assumption \ref{a1} and \eqref{add:1}, we obtain
	\begin{align*}
		\|\nabla_{x} f(x, y)-\nabla \Phi(x)\|^{2} \leqslant l^{2}\|y-\hat{y}^{*}(x, y)\|^{2} \leqslant \kappa^{2}\|\nabla_{y} f(x, y)\|^{2}.
	\end{align*}
\end{proof}

	%
	%
%

\subsection{Complexity Analysis}\label{section Analysis}
We first give the $\epsilon$-stationary point definition of \eqref{min_max_problem_deterministic} as follows, which is also used in \citep{yang22b}. 
\begin{definition} $\hat{x}$ is an $\epsilon$-stationary point of \eqref{min_max_problem_deterministic} if $\mathbb{E} \|\nabla\Phi(\hat{x})\|\leqslant\epsilon$.
\end{definition}

\begin{lemma}
	Denote $V_{t}=\frac{3}{2} \Phi(x_{t})-\frac{1}{2} f(x_{t}, y_{t})$. If Assumptions \ref{a3} and \ref{a1} hold, and if $\beta\leqslant\frac{1}{4d_{2}L}$, $\alpha\leqslant\text{min}\{\frac{\beta}{32\kappa^{2}},\frac{1}{10d_{1}L}\}$, we get
	\begin{align} 
		\mathbb{E}V_{t}-\mathbb{E}V_{t+1}&\geqslant
		\frac{\alpha}{4}\mathbb{E}\|\nabla\Phi(x_{t})\|^{2}
		-\frac{\theta_{1}}{4}\mu_{1}^{2}
		-\frac{3d_{2}^{2} L^{2}\beta}{16}\mu_{2}^{2},
		\label{the1}
	\end{align}
	where $\theta_{1}=(5d_{1}L+\frac{3}{2\alpha}+\frac{3}{2}L+d_{2}L)d_{1}^{2}L^{2}\alpha^{2}$.
\end{lemma}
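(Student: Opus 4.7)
The plan is to produce a one-step descent bound for the Lyapunov function $V_t=\tfrac{3}{2}\Phi(x_t)-\tfrac{1}{2}f(x_t,y_t)$, which is a standard primal-dual potential for NC-PL problems (note $V_t\geq \Phi(x_t)\geq \Phi^*$ since $\Phi(x)\geq f(x,y)$). Writing $\mathbb{E}[V_t-V_{t+1}]=\tfrac{3}{2}\mathbb{E}[\Phi(x_t)-\Phi(x_{t+1})]+\tfrac{1}{2}\mathbb{E}[f(x_{t+1},y_{t+1})-f(x_t,y_t)]$, I would bound the two brackets separately, then combine them using Lemma~\ref{g smooth} (the PL-induced inequality $\|\nabla_x f-\nabla\Phi\|^2\leq\kappa^2\|\nabla_y f\|^2$), which is the mechanism that lets the $y$-ascent absorb a stray $\|\nabla_y f\|^2$ term produced by the $x$-descent.

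First, for the $\Phi$ part I would apply the $L$-smoothness of $\Phi$ (Lemma~\ref{philip}) along $x_{t+1}=x_t-\alpha s_t$, giving $\Phi(x_{t+1})-\Phi(x_t)\leq -\alpha\langle\nabla\Phi(x_t),s_t\rangle+\tfrac{L\alpha^2}{2}\|s_t\|^2$. Taking expectation over $u_t$ turns $\mathbb{E}[s_t]$ into $\nabla_x f_{\mu_1}(x_t,y_t)$; I would split the inner product into $\langle\nabla\Phi(x_t),\nabla_x f(x_t,y_t)\rangle$ plus a bias term $\langle\nabla\Phi(x_t),\nabla_x f_{\mu_1}-\nabla_x f\rangle$. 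The first piece is handled by the polarization identity $\langle a,b\rangle=\tfrac{1}{2}\|a\|^2+\tfrac{1}{2}\|b\|^2-\tfrac{1}{2}\|a-b\|^2$, producing the desired $-\tfrac{\alpha}{2}\|\nabla\Phi(x_t)\|^2$, a helpful $-\tfrac{\alpha}{2}\|\nabla_x f\|^2$, and a harmful $+\tfrac{\alpha\kappa^2}{2}\|\nabla_y f(x_t,y_t)\|^2$ after invoking Lemma~\ref{g smooth}. The bias term is bounded by Young plus \eqref{gradgap_x}, contributing a $\mu_1^2 d_1^2 l^2$ piece that feeds into $\theta_1$. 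The quadratic term $\tfrac{L\alpha^2}{2}\mathbb{E}\|s_t\|^2$ is controlled by \eqref{gradboundx}, and taking $\alpha\leq\tfrac{1}{10 d_1L}$ lets me absorb the resulting $\|\nabla_x f\|^2$ into the $-\tfrac{\alpha}{2}\|\nabla_x f\|^2$ slack.

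Next, for $f(x_{t+1},y_{t+1})-f(x_t,y_t)$, I split it into the $y$-step $f(x_{t+1},y_{t+1})-f(x_{t+1},y_t)$ and the $x$-step $f(x_{t+1},y_t)-f(x_t,y_t)$. Using $l$-smoothness in $y$ and taking expectation over $v_t$, the $y$-step is lower-bounded by $\beta\|\nabla_y f(x_{t+1},y_t)\|^2$ minus smoothing-bias corrections from \eqref{gradgap} and a quadratic term controlled by \eqref{gradboundy}; the choice $\beta\leq\tfrac{1}{4d_2L}$ preserves a positive coefficient $\approx\tfrac{\beta}{2}\|\nabla_y f(x_{t+1},y_t)\|^2$, plus the $\mu_2^2$ residue entering the bound as $\tfrac{3d_2^2L^2\beta}{16}\mu_2^2$. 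The $x$-step $f(x_{t+1},y_t)-f(x_t,y_t)$ is handled by $l$-smoothness in $x$ analogously to the $\Phi$ step, producing another $\|\nabla_x f\|^2$ piece that can be absorbed together with the $\Phi$-step quadratic term into the slack available under $\alpha\leq\tfrac{1}{10d_1L}$. Because the harmful term from the $\Phi$-side is $\|\nabla_y f(x_t,y_t)\|^2$ whereas the helpful term from the $y$-ascent is $\|\nabla_y f(x_{t+1},y_t)\|^2$, I would bridge the two via Lipschitz continuity of $\nabla_y f$ in $x$ (Assumption~\ref{a1}), paying $\mathcal{O}(l^2\alpha^2\|s_t\|^2)$ and re-using \eqref{gradboundx}; this is the source of several summands in $\theta_1$, in particular the $d_1L$, $\tfrac{3}{2\alpha}$ and $d_2L$ contributions.

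Finally, I would assemble everything with weights $\tfrac{3}{2}$ and $\tfrac{1}{2}$. The coefficient of $\|\nabla_y f(x_{t+1},y_t)\|^2$ becomes $\tfrac{\beta}{4}-\tfrac{3\alpha\kappa^2}{4}$ (roughly), and the key stepsize condition $\alpha\leq\tfrac{\beta}{32\kappa^2}$ is exactly what makes this nonnegative with room to spare. The $\|\nabla\Phi(x_t)\|^2$ coefficient then emerges as $\tfrac{3}{2}\cdot\tfrac{\alpha}{2}$ minus an $O(\alpha)$ loss from the bias-term Young step, which I would calibrate to leave $\tfrac{\alpha}{4}$. All $\mu_1^2$ residues collect into $\tfrac{\theta_1}{4}$ and all $\mu_2^2$ residues into $\tfrac{3d_2^2 L^2\beta}{16}$. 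The main obstacle is bookkeeping: choosing the Young-inequality constants consistently so that (i) the three $\|\nabla_x f\|^2$ terms (from $\Phi$-quadratic, cross-term, and $f$-step quadratic) are all absorbed by the single slack of size $\tfrac{\alpha}{2}\|\nabla_x f\|^2$ available, and (ii) the bridging step between $\|\nabla_y f(x_{t+1},y_t)\|^2$ and $\|\nabla_y f(x_t,y_t)\|^2$ produces exactly the summands $5d_1L+\tfrac{3}{2\alpha}+\tfrac{3}{2}L+d_2L$ inside $\theta_1$; everything else is routine Young + Lemma~\ref{zoagda_lemma}.
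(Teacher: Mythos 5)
Your proposal follows essentially the same route as the paper's proof: the same decomposition of $V_t-V_{t+1}$ into the $L$-smoothness bound for $\Phi$ and the two-stage ($x$-step then $y$-step) smoothness bounds for $f$, the same use of the polarization identity, Lemma \ref{g smooth} to absorb $\|\nabla_x f-\nabla\Phi\|^2$ into $\kappa^2\|\nabla_y f\|^2$, the bridging of $\|\nabla_y f(x_{t+1},y_t)\|^2$ to $\|\nabla_y f(x_t,y_t)\|^2$ via Assumption \ref{a1} at cost $l^2\alpha^2\|s_t\|^2$, and Lemma \ref{zoagda_lemma} to control $\mathbb{E}\|s_t\|^2$ and the smoothing biases, with the stepsize conditions playing exactly the roles you identify. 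The remaining differences are only in where the Young/Cauchy--Schwarz constants are placed (e.g., the paper also spends part of the $\frac{3\alpha}{4}\|\nabla\Phi\|^2$ budget through the inequality $\|\nabla_x f\|^2\leqslant 2\kappa^2\|\nabla_y f\|^2+2\|\nabla\Phi\|^2$ rather than only through the bias-term Young step), which is the bookkeeping you already flagged.
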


\begin{proof}
	By Assumption \ref{a1} and \eqref{algolstep_y},  we can get		
	\begin{align} \nonumber
		f(x_{t+1}, y_{t+1}) & \geqslant f(x_{t+1}, y_{t})+\langle\nabla_{y} f(x_{t+1}, y_{t}), y_{t+1}-y_{t}\rangle-\frac{l}{2}\|y_{t+1}-y_{t}\|^{2} \\
		&\nonumber = f(x_{t+1}, y_{t})+\langle\nabla_{y} f(x_{t+1}, y_{t}), \beta\omega_{t}\rangle-\frac{l}{2}\beta^{2}\|\omega_{t}\|^{2} .
	\end{align}		
	Similarly, by Assumption \ref{a1} and \eqref{algolstep_x}, we have
	\begin{align} \nonumber
		f(x_{t+1}, y_{t}) & \geq f(x_{t}, y_{t})+\langle\nabla_{x} f(x_{t}, y_{t}), x_{t+1}-x_{t}\rangle-\frac{l}{2}\|x_{t+1}-x_{t}\|^{2} \\
		&\nonumber = f(x_{t}, y_{t})-\langle\nabla_{x} f(x_{t}, y_{t}),\alpha s_{t} \rangle-\frac{l}{2}\alpha^{2}\|s_{t}\|^{2}.
	\end{align}
	Taking expectation of the above two inequalities, we get
	\begin{align}
		&\quad\mathbb{E} f(x_{t+1}, y_{t+1})-\mathbb{E} f(x_{t+1}, y_{t})\nonumber\\
		&\geqslant \beta \mathbb{E} \langle\nabla_{y} f(x_{t+1}, y_{t}), \nabla_{y} f_{u}(x_{t+1}, y_{t})\rangle-\frac{l}{2} \beta^{2} \mathbb{E} \|\omega_{t}\|^{2}, \label{sec3:1}
	\end{align}
	and
	\begin{align}
		&\quad\mathbb{E} f(x_{t+1}, y_{t})-\mathbb{E} f(x_{t}, y_{t})\nonumber \\
		&\geqslant -\alpha\mathbb{E}\langle\nabla_{x} f(x_{t}, y_{t}),  \nabla_{x} f_{u}(x_{t}, y_{t})\rangle-\frac{l}{2} \alpha^{2} \mathbb{E} \| s_{t}\|^{2}.\label{sec3:2}
	\end{align}
	Combining \eqref{sec3:1} and \eqref{sec3:2}, we obtain
	\begin{align} \nonumber
		&\quad\mathbb{E}f(x_{t+1}, y_{t+1})-\mathbb{E} f(x_{t}, y_{t})\\
		&\geqslant \nonumber
		\beta \mathbb{E}\langle\nabla_{y} f(x_{t+1}, y_{t}), \nabla_{y} f(x_{t}, y_{t})\rangle-\frac{l}{2} \beta^{2} \mathbb{E}\|w_{t}\|^{2} \\ \nonumber
		&\quad-\alpha \mathbb{E} \langle\nabla_{x} f(x_{t}, y_{t}), \nabla_{x} f_{\mu}(x_{t}, y_{t})\rangle-\frac{l}{2} \alpha^{2} \mathbb{E}\|s_{t}\|^{2} \\ \nonumber
		&=\frac{\beta}{2} \mathbb{E}\|\nabla_{y} f(x_{t+1} y_{t})\|^{2}-\frac{\beta}{2} \mathbb{E}\|\nabla_{y} f(x_{t+1,} y_{t})-\nabla_{y} f_{\mu}(x_{t+1}, y_{t})\|^{2} \\ \nonumber
		&\quad+\frac{\beta}{2} \mathbb{E}\|\nabla_{y} f_{\mu}(x_{t+1}, y_{t})\|^{2}-\frac{l}{2} \beta^{2} \mathbb{E}\|w_{t}\|^{2}+\frac{\alpha}{2} \mathbb{E}\|\nabla_{x} f(x_{t}, y_{t})-\nabla_{x} f_{\mu}(x_{t}, y_{t})\|^{2} \\ 
		&\quad-\frac{\alpha}{2} \mathbb{E}\|\nabla_{x} f(x_{t}, y_{t})\|^{2}-\frac{\alpha}{2} \mathbb{E}\|\nabla_{x} f_{\mu}(x_{t}, y_{t})\|^{2}-\frac{l}{2} \alpha^{2} \mathbb{E}\|s_{t}\|^{2},
		\label{sec3:3}
	\end{align}
	where the last equality is due to a simple fact that  $ 2\langle a,b \rangle=\|a+b\|^{2}-\|a\|^{2}-\|b\|^{2}$.
	By $L$-smoothness of $\Phi$ in Lemma \ref{philip} and \eqref{algolstep_x}, we get 
	\begin{align*}
		\Phi(x_{t+1}) & \leqslant \Phi(x_{t})+\langle\nabla \Phi(x_{t}), x_{t+1}-x_{t}\rangle+\frac{L}{2}\|x_{t+1}-x_{t}\|^{2} \\
		&=\Phi(x_{t})-\alpha\langle\nabla \Phi(x_{t}), s_{t}\rangle+\frac{L}{2} \alpha^{2}\|s_{t}\|^{2}.
	\end{align*}
	Taking expectation of both side and using the fact that $ 2\langle a,b \rangle=\|a+b\|^{2}-\|a\|^{2}-\|b\|^{2}$,  we get
	\begin{align} \nonumber
		&\mathbb{E}\Phi(x_{t+1})-\mathbb{E}\Phi(x_{t})\\
		& \leqslant -\alpha\mathbb{E}\langle\nabla \Phi(x_{t}), \nabla_{x}f_{\mu}(x_{t},y_{t})\rangle+\frac{L}{2}\alpha^{2}\mathbb{E}\|s_{t}\|^{2}\nonumber\\
		&=
		\frac{\alpha}{2}\mathbb{E}\|\nabla\Phi(x_{t})
		-\nabla_{x}f_{\mu}(x_{t},y_{t})\|^{2}\\ \nonumber
		&\quad-\frac{\alpha}{2}\mathbb{E}\|\nabla\Phi(x_{t})\|^{2}
		-\frac{\alpha}{2}\mathbb{E}\|\nabla_{x}f_{\mu}(x_{t},y_{t})\|^{2}
		+\frac{L}{2}\alpha^{2}\mathbb{E}\|s_{t}\|^{2}\\ \nonumber
		&\leqslant \alpha\mathbb{E}\|\nabla\Phi(x_{t})-\nabla_{x}f(x_{t},y_{t})\|^{2} +\alpha\mathbb{E}\|\nabla_{x}f(x_{t},y_{t})-\nabla_{x}f_{\mu}(x_{t},y_{t})\|^{2}\\ \nonumber
		& \quad-\frac{\alpha}{2}\mathbb{E}\|\nabla\Phi(x_{t})\|^{2}-\frac{\alpha}{2}\mathbb{E}\|\nabla_{x}f_{\mu}(x_{t},y_{t})\|^{2}+\frac{L}{2}\alpha^{2}\mathbb{E}\|s_{t}\|^{2}\\ \nonumber
		&\leqslant \alpha\kappa^{2}\|\nabla_{y}f(x_{t},y_{t})\|^{2}+\alpha\mathbb{E}\|\nabla_{x}f(x_{t},y_{t})-\nabla_{x}f_{\mu}(x_{t},y_{t})\|^{2}\\
		&\quad-\frac{\alpha}{2}\mathbb{E}\|\nabla\Phi(x_{t})\|^{2}-\frac{\alpha}{2}\mathbb{E}\|\nabla_{x}f_{\mu}(x_{t},y_{t})\|^{2}+\frac{L}{2}\alpha^{2}\mathbb{E}\|s_{t}\|^{2},
		\label{sec3:4}
	\end{align}
	where the second last inequailty is by the Cauchy-Schwarz inequality and the last inequality is by Lemma \ref{g smooth}.
	Combining  \eqref{sec3:3} and \eqref{sec3:4} and by the definition of $V_t$, after rearranging the terms, we can get that 
	\begin{align} \nonumber
		&\mathbb{E}V_{t}-\mathbb{E}V_{t+1}\\
		\nonumber\geqslant& \frac{3\alpha}{4}\mathbb{E}\|\nabla\Phi(x_{t})\|^{2}
		+\frac{3\alpha}{4}\mathbb{E}\|\nabla_{x}f_{\mu}(x_{t},y_{t})\|^{2} \\
		\nonumber
		&-\frac{3L}{4}\alpha^{2}\mathbb{E}\|s_{t}\|^{2}
		-\frac{3\alpha}{2}\kappa^{2}\mathbb{E}\|\nabla_{y}f(x_{t},y_{t})\|^{2}\\
		\nonumber
		&-\frac{3\alpha}{2}\mathbb{E}\|\nabla_{x}f(x_{t},y_{t})-\nabla_{x}f_{\mu}(x_{t},y_{t})\|^{2}+\frac{\beta}{4}\mathbb{E}\|\nabla_{y}f(x_{t+1},y_{t})\|^{2}\\
		\nonumber			
		&-\frac{l^{2}}{4}\beta^{2}\mathbb{E}\|\omega_{t}\|^{2}-\frac{\alpha}{4}\mathbb{E}\|\nabla_{x}f_{\mu}(x_{t},y_{t})\|^{2}-\frac{l}{4}\alpha^{2}\mathbb{E}\|s_{t}\|^{2}\\
		\nonumber
		&-\frac{\beta}{4}\mathbb{E}\|\nabla_{y}f(x_{t+1},y_{t})-\nabla_{y}f_{\mu}(x_{t+1},y_{t})\|^{2}
		-\frac{\alpha}{4}\mathbb{E}\|\nabla_{x}f(x_{t},y_{t})\|^{2}\\
		\nonumber
		\geqslant&\frac{3\alpha}{4}\mathbb{E}\|\nabla\Phi(x_{t})\|^{2}+\frac{\alpha}{2}\mathbb{E}\|\nabla_{x}f_{\mu}(x_{t},y_{t})\|^{2}-L\alpha^{2}\mathbb{E}\|s_{t}\|^{2}\\
		\nonumber
		&-\frac{3\alpha}{2}\kappa^{2}\mathbb{E}\|\nabla_{y}f(x_{t},y_{t})\|^{2}-\frac{3\alpha}{2}\mathbb{E}\|\nabla_{x}f(x_{t},y_{t})-\nabla_{x}f_{\mu}(x_{t},y_{t})\|^{2}
		\\
		\nonumber
		&+(\frac{\beta}{4}-\frac{\beta^{2}}{2}d_{2}L)\mathbb{E}\|\nabla_{y}f(x_{t+1},y_{t})\|^{2}-\frac{\beta}{4}\mathbb{E}\|\nabla_{y}f(x_{t+1},y_{t})-\nabla_{y}f_{\mu}(x_{t+1},y_{t})\|^{2}\\
		&-\frac{\alpha}{4}\mathbb{E}\|\nabla_{x}f(x_{t},y_{t})\|^{2}-\frac{L^{3}}{8}\beta^{2}\mu_{2}^{2}d_{2}^{2},\label{sec3:5}
	\end{align}
	where the second inequality is due to that $\frac{\beta}{4}-\frac{\beta^{2}}{2}d_{2}L>0$ when $\beta\leqslant\frac{1}{4d_{2}L}$ and by replacing $l$ with $L$ since $l \leqslant L$ and \eqref{gradboundy} in Lemma \ref{zoagda_lemma}.
	On the other hand, by the Cauchy-Schwarz inequality and \eqref{phigap} in Lemma \ref{g smooth}, we have
	\begin{align}
		\|\nx f(\xt,\yt)\|^{2}
		& \nonumber
		\leqslant 2\|\nx f(\xt,\yt)-\np(\xt)\rl+2\|\np(\xt)\|^{2}\\
		& 
		\leqslant 2\kappa^{2}\|\ny f(\xt,\yt)\rl+2\|\np(\xt)\rl.
		\label{sec3:6}
	\end{align}
	By using a simple inequality that $\|a\|^{2} \geqslant\|b\|^{2} / 2-\|a-b\|^{2}$, Assumption \ref{a1} and the definition of $s_t$, we can easily get  
	\begin{align}
		\|\ny f(\xtt,\yt)\rl & \nonumber
		\geqslant \frac{1}{2}\|\ny f(\xt,\yt)\rl-\|\ny f(\xtt,\yt)-\ny f(\xt,\yt)\rl\\
		&
		\geqslant \frac{1}{2}\|\ny f(\xt,\yt)\rl-l^{2}\alpha^{2}\|\vt\rl.
		\label{sec3:7}
	\end{align}
	Denote $G_{1}=L\alpha^{2}+(\frac{\beta}{4}-\frac{\beta^{2}}{2}d_{2}L) L^{2}\alpha^{2}$ and $G_{2}=\frac{\beta}{8}-\frac{L}{4}d_{2}\beta^{2}-2\alpha\kappa^{2}$. 
	By plugging \eqref{sec3:6} and \eqref{sec3:7} into \eqref{sec3:5} and rearranging all the terms, we obtain
	\begin{align} 
		&\quad\mathbb{E}V_{t}-\mathbb{E}V_{t+1} \nonumber\\
		& \geqslant \nonumber
		\frac{\alpha}{4}\mathbb{E}\|\nabla\Phi(x_{t})\|^{2}+\fa\BE \|\nx\fu(\xt,\yt)\rl-G_{1}\mathbb{E}\|s_{t}\|^{2}\\
		& \nonumber
		\quad+G_{2}\mathbb{E}\|\nabla_{y}f(x_{t},y_{t})\|^{2}
		-\frac{3\alpha}{2}\mathbb{E}\|\nabla_{x}f(x_{t},y_{t})-\nabla_{x}f_{\mu}(x_{t},y_{t})\|^{2}\\
		&
		\quad-\frac{\beta}{4}\mathbb{E}\|\nabla_{y}f(x_{t+1},y_{t})-\nabla_{y}f_{\mu}(x_{t+1},y_{t})\|^{2}-\frac{L^{3}}{8}\beta^{2}\mu_{2}^{2}d_{2}^{2}.\label{add:2}
	\end{align}
	By \eqref{gradboundx} in Lemma \ref{zoagda_lemma}, we can compute that
	\begin{align}
		\mathbb{E}\|s_{t}\|^{2}&\leqslant4d_{1}\mathbb{E}\|\nabla_{x}f(x_t,y_t)-\nabla_{x}f_{\mu}(x_t,y_t)\|^{2}+4d_{1}\mathbb{E}\|\nabla_{x}f_{\mu}(x_t,y_t)\|^{2}+\frac{\mu_{1}^{2}d_{1}^{2}l^{2}}{2}.\label{add:3}
	\end{align} 
	By plugging \eqref{add:3} into \eqref{add:2},	
	\begin{align} \nonumber
		&\quad \mathbb{E}V_{t}-\mathbb{E}V_{t+1} \\
		\geqslant &  \nonumber
		\frac{\alpha}{4}\mathbb{E}\|\nabla\Phi(x_{t})\|^{2}
		+(\fa-4d_{1}G_{1})\BE \|\nx\fu(\xt,\yt)\rl\\
		& \nonumber
		+G_{2}\mathbb{E}\|\nabla_{y}f(x_{t},y_{t})\|^{2}-(\frac{3\alpha}{2}+4d_{1}G_{1})
		\mathbb{E}\|\nabla_{x}f(x_{t},y_{t})-\nabla_{x}f_{\mu}(x_{t},y_{t})\|^{2}\\
		&
		-\frac{\beta}{4}\mathbb{E}\|\nabla_{y}f(x_{t+1},y_{t})-\nabla_{y}f_{\mu}(x_{t+1},y_{t})\|^{2}
		-\frac{\mu_{1}^{2}}{2}G_{1}d_{1}^{2}l^{2}
		-\frac{L^{3}}{8}\beta^{2}\mu_{2}^{2}d_{2}^{2}.\label{add:4}
	\end{align} 
	When $\beta\leqslant\frac{1}{4d_{2}L}\leqslant\frac{1}{L}$ and $\alpha\leqslant\text{min}\{\frac{\beta}{32\kappa^{2}},\frac{1}{10d_{1}L}\}$, 
	it can be easily checked that $\frac{\alpha}{2}-4d_{1}G_{1}\geqslant 0$, $\frac{3\alpha}{2}+4d_{1}G_{1}\leqslant \frac{3}{2}\alpha+4d_{1}\alpha^{2}L+d_{1}\beta L^{2}\alpha^{2}$, and $G_{2}=\frac{\beta}{8}(1-2\beta Ld_{2})-2\alpha\kappa^{2}\geqslant\frac{\beta}{16}-2\alpha\kappa^{2}\geqslant 0$.
	The proof is then completed by combining \eqref{add:4} and \eqref{gradgap} in Lemma \ref{zoagda_lemma}.
\end{proof}
Denote $T(\epsilon):=\min\{t\ | \ \mathbb{E}\|\nabla \Phi(x_{t})\|\leqslant\epsilon \}$, which means the minimal number of  iterations to obtain an $\epsilon$-stationary point.
\begin{theorem}\label{zoagda_theorem}
	Suppose that Assumptions \ref{a3} and \ref{a1} hold, and $\Phi^{*}$ exists and is a finite value. Let $\mu_{1}=\frac{\sqrt{\alpha}\epsilon}{2\sqrt{\theta_{1}}},\mu_{2}=\frac{\sqrt{\alpha}\epsilon}{\sqrt{3\beta}d_{2}L}$. If $\alpha\leqslant\min\{\frac{\beta}{32\kappa^{2}},\frac{1}{10d_{1}L}\}$ and $\beta\leqslant\frac{1}{4d_{2}L}$, we have
	\begin{align*}
		T(\varepsilon)&\leqslant \frac{4[3\Phi(x_{0})-f(x_0, y_0)-2\Phi^{*}]}{\alpha\epsilon^{2}}.\end{align*}
\end{theorem}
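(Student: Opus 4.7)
The plan is to pass from the per-iteration descent estimate \eqref{the1} to a telescoping sum, carefully calibrate the smoothing parameters $\mu_1,\mu_2$ so that the two bias terms on the right of \eqref{the1} collectively lose only half of the gradient-norm increment, and then close the argument by lower-bounding the potential $V_t$ by $\Phi^\ast$.

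First, I would substitute the prescribed choices $\mu_1=\frac{\sqrt{\alpha}\epsilon}{2\sqrt{\theta_1}}$ and $\mu_2=\frac{\sqrt{\alpha}\epsilon}{\sqrt{3\beta}\,d_2 L}$ directly into \eqref{the1}. A short calculation gives $\tfrac{\theta_1}{4}\mu_1^{2}=\tfrac{\alpha\epsilon^{2}}{16}$ and $\tfrac{3d_2^{2}L^{2}\beta}{16}\mu_2^{2}=\tfrac{\alpha\epsilon^{2}}{16}$, so that under the stated step-size conditions
\begin{equation*}
\mathbb{E}V_{t}-\mathbb{E}V_{t+1}\;\geqslant\;\frac{\alpha}{4}\mathbb{E}\|\nabla\Phi(x_{t})\|^{2}-\frac{\alpha\epsilon^{2}}{8}.
\end{equation*}

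Next I would invoke the definition of $T(\epsilon)$: for every $t<T(\epsilon)$ one has $\mathbb{E}\|\nabla\Phi(x_{t})\|>\epsilon$, and by Jensen's inequality $\mathbb{E}\|\nabla\Phi(x_{t})\|^{2}\geqslant(\mathbb{E}\|\nabla\Phi(x_{t})\|)^{2}>\epsilon^{2}$. Telescoping the previous display over $t=0,1,\ldots,T-1$ for any $T\leqslant T(\epsilon)$ therefore yields
\begin{equation*}
\mathbb{E}V_{0}-\mathbb{E}V_{T}\;\geqslant\;\frac{T\alpha\epsilon^{2}}{4}-\frac{T\alpha\epsilon^{2}}{8}\;=\;\frac{T\alpha\epsilon^{2}}{8}.
\end{equation*}

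For the lower bound on $V_T$, I would note that since $f(x_T,y_T)\leqslant\max_y f(x_T,y)=\Phi(x_T)$,
\begin{equation*}
V_{T}=\tfrac{3}{2}\Phi(x_{T})-\tfrac{1}{2}f(x_{T},y_{T})\;\geqslant\;\Phi(x_{T})\;\geqslant\;\Phi^{*}.
\end{equation*}
Combining this with $V_{0}=\tfrac{3}{2}\Phi(x_{0})-\tfrac{1}{2}f(x_{0},y_{0})$ and the telescoped inequality, and taking $T=T(\epsilon)$, gives
\begin{equation*}
\frac{T(\epsilon)\alpha\epsilon^{2}}{8}\;\leqslant\;\tfrac{1}{2}\bigl(3\Phi(x_{0})-f(x_{0},y_{0})-2\Phi^{*}\bigr),
\end{equation*}
from which the claimed bound on $T(\epsilon)$ follows immediately. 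The only real bookkeeping obstacle is keeping the algebra on $\mu_1,\mu_2$ tight enough that the two bias terms sum to exactly $\tfrac{\alpha\epsilon^{2}}{8}$ and leave a strict positive margin $\tfrac{\alpha}{8}\epsilon^{2}$ per iteration; everything else is a direct application of \eqref{the1} and the definition of $V_t$.
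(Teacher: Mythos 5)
Your proposal is correct and follows essentially the same route as the paper: telescoping the descent inequality \eqref{the1}, using the choice of $\mu_1,\mu_2$ so that the two bias terms absorb only half of the $\frac{\alpha}{4}\epsilon^2$ per-iteration gain, and lower-bounding the potential via $f(x,y)\leqslant\Phi(x)$ so that $V_{T}\geqslant\Phi^{*}$. The only cosmetic difference is that you bound $\mathbb{E}V_{T(\epsilon)}\geqslant\Phi^{*}$ directly while the paper bounds $\min_{x,y}\bigl(\tfrac{3}{2}\Phi(x)-\tfrac{1}{2}f(x,y)\bigr)\geqslant\Phi^{*}$, and you make the Jensen step $\mathbb{E}\|\nabla\Phi(x_t)\|^{2}\geqslant(\mathbb{E}\|\nabla\Phi(x_t)\|)^{2}$ explicit where the paper leaves it implicit.
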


\begin{proof}
	Telescoping and rearranging \eqref{the1}, we get 
	\begin{align*}
		&\sum_{t=0}^{T(\epsilon)-1} \mathbb{E}\|\nabla \Phi(x_{t})\|^{2} \\
		&\leqslant \frac{4}{\alpha}\left[V_{0}-\min _{x, y} \left(\frac{3}{2} \Phi(x)-\frac{1}{2} f(x, y)\right)\right]+\frac{\theta_{1}}{\alpha}\mu_{1}^{2}T(\epsilon)+\frac{3d_{2}^{2} L^{2}\beta}{4\alpha}\mu_{2}^{2}T(\epsilon).
	\end{align*}
	By the definitions of $T(\epsilon)$, $\mu_{1}$ and $\mu_{2}$, we have
	\begin{align}
		\epsilon^{2}&\nonumber \leqslant \frac{4}{\alpha T(\epsilon)}\left[V_{0}-\min _{x, y} \left(\frac{3}{2} \Phi(x)-\frac{1}{2} f(x, y)\right)\right]+\frac{\epsilon^{2}}{2}\\
		& \leqslant \frac{4}{\alpha T(\epsilon)}[V_{0}-\Phi^{*}]+\frac{\epsilon^{2}}{2}\\
		& = \frac{2}{\alpha T(\epsilon)}[3\Phi(x_{0})-f(x_0, y_0)-2\Phi^{*}]+\frac{\epsilon^{2}}{2},
		\label{theorem1}
	\end{align}
	where  the second inequality is due to $\Phi(x)\geqslant f(x,y)$ by the definition of $\Phi(x)$. 
	The proof is completed by \eqref{theorem1}.
\end{proof}

By choosing $\beta=\frac{1}{4d_{2}L}$ and  $\alpha=\min\{\frac{\beta}{32\kappa^{2}},\frac{1}{10d_{1}L}\}$ in Theorem \ref{zoagda_theorem}, we can easily compute that $\mu_{1}=\mathcal{O}(\sqrt{\frac{\kappa^2 d_2+ d_1}{d_2+d_1}}\cdot \frac{\epsilon}{d_{1} L})$, $\mu_{2}=\mathcal{O}(\frac{\mu }{1+ \mu \sqrt{d_1d_2}}\cdot \frac{\epsilon}{d_{2} L^2})$ and   $T(\epsilon)=\mathcal{O}(( \kappa^{2} d_2+ d_1)L \epsilon^{-2})$, which means the iteration complexity of Algorithm \ref{algo1} to find an $\epsilon$-stationary point for \eqref{min_max_problem_deterministic} is  $\mathcal{O}(\epsilon^{-2})$.
Hence, by \eqref{sec2:1} and \eqref{sec2:2}, the total number of function value queries of  Algorithm \ref{algo1} is $4*T(\epsilon)$ which is the same order of  $\mathcal{O}(\epsilon^{-2})$.

\section{A Zeroth-order Algorithm for Stochastic NC-PL Minimax Problems}
In this section, we propose a new zeroth-order gradient descent ascent method with variance reduction (ZO-VRAGDA) for solving \eqref{min_max_problem_stochastic}, under the setting that only noisy function values can be used. At each iteration, we approximate the the first order gradient by a zeroth-order gradient estimator, and using variance reduction technique   to improve the algorithm which is similar to that in \citep{fang2018spider}. Detailedly, we choose a relatively large batch size in zeroth-order gradient estimator every $q$ iterations, while a small batch size at other iterations. The detailed algorithm is shown as in Algorithm \ref{algo2}. 
\begin{algorithm}[t]
	\caption{(ZO-VRAGDA)}
	\label{algo2}
	\begin{algorithmic}
		\item[Step 1]~ Input $x_{0}, y_{0},q$, $\alpha, \beta$, $B$, $b$; Set $t=0$.
		\item[Step 2]~ If $\mod (t,q)=0$, generate $B$ samples, i.e., $\mathcal{B}_{t}=\{\xi_{t}^{i}\}_{i=1}^{B}$
		and compute 
		\begin{align}
			m_{t}=\hat{\nabla}_{x} G(x_{t}, y_{t} ; \mathcal{B}_{t});\label{mq}
		\end{align}			
		Otherwise, generate $b$ samples, i.e., $\mathcal{I}_{t}=\{\xi_{t}^{i}\}_{i=1}^{b}$ and compute 
		\begin{align}
			m_{t}=\hat{\nabla}_{x} G(x_{t}, y_{t};{\mathcal{I}_{t}})-\hat{\nabla}_{x} G(x_{t-1}, y_{t-1};{\mathcal{I}_{t}})+m_{t-1};\label{mt}
		\end{align}
		Update $x_{t+1}$:
		\begin{equation}\label{algolstep_x2}
			x_{t+1}=x_{t}-\alpha m_{t}.
		\end{equation}
		\item[Step 3]~  If $\mod (t,q)=0$, generate $B$ samples, i.e., $\bar{\mathcal{B}}_{t}=\{\zeta_{t}^{i}\}_{i=1}^{B}$
		and compute 
		\begin{align}
			n_{t}=\hat{\nabla}_{y} G(x_{t+1}, y_{t} ; \bar{\mathcal{B}}_{t});\label{nq}
		\end{align}
		Otherwise, generate $b$ samples, i.e., $\bar{\mathcal{I}}_{t}=\{\zeta_{t}^{i}\}_{i=1}^{b}$ and compute 
		\begin{align}
			n_{t}=\hat{\nabla}_{y} G(x_{t+1}, y_{t}; \bar{\mathcal{I}}_{t})-\hat{\nabla}_{y} G(x_{t}, y_{t-1} ;\bar{\mathcal{I}}_{t})+n_{t-1}\label{nt}.
		\end{align}
		Update $y_{t+1}$:
		\begin{equation}\label{algolstep_y2}
			y_{t+1}=y_{t}+\beta n_{t}.
		\end{equation}
		\item[Step 4]~ If converges, stop; otherwise, set $t=t+1$, go to Step 2.
	\end{algorithmic}
\end{algorithm}
Note that the first-order version of Algorithm \ref{algo2} is similar to the VR-SMDA algorithm proposed in \citep{huang2021efficient}.  The main difference is that $y_{t}$ in Algorithm \ref{algo2} is updated by $x_{t}$ instead of $x_{t-1}$ that used in the VR-SMDA algorithm. In other words, Algorithm \ref{algo2} is a zeroth-order alternating GDA algorithm, whereas the VR-SMDA algorithm is a first-order simultaneous GDA algorithm. 

Next, we give some mild assumptions for \eqref{min_max_problem_stochastic}  which are also used in \citep{yang22b}.
\begin{assumption}\label{psi} 
	$ g(x,y)$ satisfies all the assumptions in Assumption \ref{a3}.
\end{assumption}
\begin{assumption}\label{a2}
	The variance of the zeroth-order stochastic gradient estimator is bounded, i.e., there exists a constant  $\sigma>0 $ such that for all $x$ and $y$, it has 
	\begin{align*}
		\mathbb{E}_{(u, \xi)}\|\hat{\nabla}_{x} G(x, y ; \xi)-\nabla_{x} g_{\mu_{1}}(x, y)\|^{2} &\leqslant\sigma^{2},\\
		\mathbb{E}_{(v, \zeta)}\|\hat{\nabla}_{y} G(x, y ; \zeta)-\nabla_{y} g_{\mu_{2}}(x, y)\|^{2} &\leqslant \sigma^{2}. 
	\end{align*}
\end{assumption}
By Assumption \ref{a2}, we can easily compute that $\mathbb{E}\|\hat{\nabla}_x G(x, y ; \mathcal{B})-\nabla_{x} g_{\mu_1}(x, y)\|^2\leqslant\frac{\sigma^{2}}{r}$ and $\mathbb{E}\|\hat{\nabla}_y G(x, y ; \bar{\mathcal{B}})-\nabla_{y} g_{\mu_2}(x, y)\|^2\leqslant\frac{\sigma^{2}}{r}$.
\begin{assumption}\label{vr_lip_assumption}
	For each component $G(x,y;\xi)$ has Lipschitz continuous gradients, i.e., there exists a constant $\bar{l}>0$ such that $\forall x_{1}, x_{2} \in \mathbb{R}^{d_{1}}$, $y_{1}, y_{2} \in \mathbb{R}^{d_{2}}$,
	\begin{align*}
		\|\nabla_x G(x_{1},y_{1};\xi) - \nabla_x G(x_{2},y_{2};\xi)\| \leqslant \bar{l}[\|x_{1}-x_{2}\|+\|y_{1}-y_{2}\|],\\
		\|\nabla_y G(x_{1},y_{1};\zeta) - \nabla_y G(x_{2},y_{2};\zeta)\| \leqslant \bar{l}[\|x_{1}-x_{2}\|+\|y_{1}-y_{2}\|].
	\end{align*}
\end{assumption}
\begin{lemma}\label{lem:5}
	$g(x,y)$ has Lipschitz continuous gradients with constant $\bar{l}$. Moreover, $\Psi(\cdot)$ is $\bar{L}$-smooth with $\bar{L}:=\bar{l}+\frac{\bar{l}^{2}}{2\mu}$.
\end{lemma}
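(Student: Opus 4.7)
\noindent\textbf{Proof Proposal for Lemma \ref{lem:5}.} The statement has two independent parts, so I would handle them separately, with the second part essentially reducing to an already-proved result.

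For the first part, namely that $g(x,y)=\mathbb{E}_{\xi\sim\mathcal{P}} G(x,y;\xi)$ has Lipschitz continuous gradients with constant $\bar{l}$, I plan to pass the Lipschitz bound from each component $G(\cdot,\cdot;\xi)$ through the expectation using Jensen's inequality. Concretely, one differentiates under the expectation (justified by the uniform Lipschitz bound on the gradients in Assumption \ref{vr_lip_assumption}, which provides an integrable envelope) to obtain $\nabla_x g(x,y)=\mathbb{E}_{\xi}\nabla_x G(x,y;\xi)$, and then writes
\begin{align*}
\|\nabla_x g(x_1,y_1)-\nabla_x g(x_2,y_2)\|
&=\Bigl\|\mathbb{E}_{\xi}\bigl[\nabla_x G(x_1,y_1;\xi)-\nabla_x G(x_2,y_2;\xi)\bigr]\Bigr\|\\
&\leqslant \mathbb{E}_{\xi}\|\nabla_x G(x_1,y_1;\xi)-\nabla_x G(x_2,y_2;\xi)\|\\
&\leqslant \bar{l}\bigl[\|x_1-x_2\|+\|y_1-y_2\|\bigr],
\end{align*}
where the last inequality is exactly Assumption \ref{vr_lip_assumption}. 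The same chain of inequalities with $\nabla_y$ in place of $\nabla_x$ and $\zeta$ in place of $\xi$ gives the corresponding bound for $\nabla_y g$.

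For the second part, that $\Psi$ is $\bar{L}$-smooth with $\bar{L}=\bar{l}+\bar{l}^2/(2\mu)$, my plan is simply to reapply Lemma \ref{philip} with $g$ playing the role of $f$. The hypotheses of Lemma \ref{philip} are that the objective satisfies the PL condition in $y$ with constant $\mu$ (Assumption \ref{a3}) and has $l$-Lipschitz gradients (Assumption \ref{a1}). For $g$, Assumption \ref{psi} supplies the PL condition in $y$ with the same constant $\mu$, and the first part just proved gives Lipschitz gradients with constant $\bar{l}$. Substituting $\bar{l}$ for $l$ in the formula $L=l+l^2/(2\mu)$ yields $\bar{L}=\bar{l}+\bar{l}^2/(2\mu)$ and the $\bar{L}$-smoothness of $\Psi$.

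There is no real obstacle here: the only delicate point is exchanging $\nabla$ and $\mathbb{E}$, and the uniform Lipschitz bound in Assumption \ref{vr_lip_assumption} together with the assumed existence and finiteness of $g=\mathbb{E}_\xi G(\cdot,\cdot;\xi)$ makes this routine. Everything else is an invocation of a previously stated lemma.
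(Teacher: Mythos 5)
Your proposal is correct and follows essentially the same route as the paper: Jensen's inequality to pass the component-wise Lipschitz bound of Assumption \ref{vr_lip_assumption} through the expectation, then an application of Lemma \ref{philip} to $g$ (using Assumption \ref{psi} for the PL condition) with $\bar{l}$ in place of $l$. Your additional remark on justifying the exchange of $\nabla$ and $\mathbb{E}$ is a detail the paper leaves implicit, but it does not change the argument.
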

\begin{proof}
	By Jensen's inequality and Assumption \ref{vr_lip_assumption}, 	$\forall x_{1}, x_{2} \in \mathbb{R}^{d_{1}}$, $y_{1}, y_{2} \in \mathbb{R}^{d_{2}}$, we have
	\begin{align*}
		&\|\nabla_x g(x_1,y_1)-\nabla_x g(x_2,y_2)\|\\
		=&\|\mathbb{E}[\nabla_x G(x_1,y_1,\xi)-\nabla_x G(x_2,y_2,\xi)]\|\\
		\leqslant&\mathbb{E}\|\nabla_x G(x_1,y_1,\xi)-\nabla_x G(x_2,y_2,\xi)\|\\
		\leqslant&\bar{l}[\|x_{1}-x_{2}\|+\|y_{1}-y_{2}\|].
	\end{align*}
	Similarly, we can prove that $\|\nabla_y g(x_1,y_1)-\nabla_y g(x_2,y_2)\|\leqslant \bar{l}[\|x_{1}-x_{2}\|+\|y_{1}-y_{2}\|].$ Moreover, we can similarly prove that $\Psi(\cdot)$ is $\bar{L}$-smooth with $\bar{L}:=\bar{l}+\frac{\bar{l}^{2}}{2\mu}$ by Lemma \ref{philip}. 
\end{proof}
Next, we analyze the iteration complexity of the ZO-VRAGDA algorithm for solving \eqref{min_max_problem_stochastic}.
\begin{definition} $\bar{x}$ is an $\epsilon$-stationary point of \eqref{min_max_problem_stochastic} if $\mathbb{E} \|\nabla\Psi(\bar{x})\|\leqslant\epsilon$.
\end{definition}
\begin{lemma} \label{vrlemma7}
	If Assumption \ref{vr_lip_assumption} holds, we have
	\begin{align}
		\mathbb{E}g(x_{t+1}, y_{t+1})\geqslant &\nonumber
		\mathbb{E}g(x_{t}, y_{t})+\frac{\beta}{2}\mathbb{E}\|\nabla_{y} g(x_{t+1}, y_{t})\|^{2}-\frac{\alpha}{2}\mathbb{E}\|\nabla_{x} g(x_{t}, y_{t})\|^{2}\\
		&\nonumber
		-\frac{\beta}{2}\mathbb{E}\|\nabla_{y} g(x_{t+1}, y_{t})-n_{t}\|^{2}+\frac{\alpha}{2}\mathbb{E}\| \nabla_{x} g(x_{t}, y_{t})-m_{t} \|^{2}\\
		&+\frac{\beta}{2}(1-\bar{l} \beta)\mathbb{E}\|n_{t}\|^{2}-\frac{\alpha}{2}(1+\alpha \bar{l} )\mathbb{E}\|m_{t}\|^{2}.
		\label{sec4:1}
	\end{align}
\end{lemma}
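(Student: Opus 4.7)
The plan is to mirror the manipulation done in the ZO-AGDA analysis (inequalities \eqref{sec3:1}--\eqref{sec3:3} above), but applied directly to $g$ with smoothness constant $\bar l$ coming from Lemma \ref{lem:5}, and with the inner products rewritten by a polarization identity so as to expose precisely the variance-like terms $\|\nabla_x g(x_t,y_t)-m_t\|^2$ and $\|\nabla_y g(x_{t+1},y_t)-n_t\|^2$ that will later be controlled via the SPIDER-type estimator recursions for $m_t$ and $n_t$.

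First I would apply the standard quadratic lower bound coming from $\bar l$-smoothness of $g$ (Lemma \ref{lem:5}) in the $y$-block at the point $x_{t+1}$, together with the update \eqref{algolstep_y2}, to obtain
\begin{align*}
g(x_{t+1},y_{t+1}) \geqslant g(x_{t+1},y_t) + \beta\langle \nabla_y g(x_{t+1},y_t), n_t\rangle - \tfrac{\bar l}{2}\beta^{2}\|n_t\|^{2}.
\end{align*}
Next I would apply the analogous smoothness inequality in the $x$-block at $y_t$, together with the update \eqref{algolstep_x2}, to obtain
\begin{align*}
g(x_{t+1},y_t) \geqslant g(x_t,y_t) - \alpha\langle \nabla_x g(x_t,y_t), m_t\rangle - \tfrac{\bar l}{2}\alpha^{2}\|m_t\|^{2}.
\end{align*}
Adding these two inequalities produces a lower bound on $g(x_{t+1},y_{t+1})-g(x_t,y_t)$ whose only non-quadratic pieces are the two inner products $\beta\langle \nabla_y g(x_{t+1},y_t), n_t\rangle$ and $-\alpha\langle \nabla_x g(x_t,y_t), m_t\rangle$.

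The core algebraic step is then to apply the polarization identities $2\langle a,b\rangle = \|a\|^{2}+\|b\|^{2}-\|a-b\|^{2}$ and $-2\langle a,b\rangle = \|a-b\|^{2}-\|a\|^{2}-\|b\|^{2}$ to those two inner products respectively, with $a=\nabla_y g(x_{t+1},y_t),\ b=n_t$ in the first case and $a=\nabla_x g(x_t,y_t),\ b=m_t$ in the second. This converts the cross terms into the exact combination
$\tfrac{\beta}{2}\|\nabla_y g(x_{t+1},y_t)\|^{2}+\tfrac{\beta}{2}\|n_t\|^{2}-\tfrac{\beta}{2}\|\nabla_y g(x_{t+1},y_t)-n_t\|^{2}$
and
$-\tfrac{\alpha}{2}\|\nabla_x g(x_t,y_t)\|^{2}-\tfrac{\alpha}{2}\|m_t\|^{2}+\tfrac{\alpha}{2}\|\nabla_x g(x_t,y_t)-m_t\|^{2}$
that appears in \eqref{sec4:1}. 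Merging the $\|n_t\|^{2}$ coefficients into $\tfrac{\beta}{2}(1-\bar l\beta)$ and the $\|m_t\|^{2}$ coefficients into $-\tfrac{\alpha}{2}(1+\alpha\bar l)$ yields the deterministic inequality; taking expectations over all the random variables produced by $\mathcal{B}_t,\mathcal{I}_t,\bar{\mathcal{B}}_t,\bar{\mathcal{I}}_t,u_i,v_i$ gives \eqref{sec4:1}.

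The reasoning is essentially bookkeeping once the two descent-lemma inequalities are written, so there is no genuine obstacle. The only point requiring mild care is choosing the two polarization identities with opposite signs (one for the $+\beta$ ascent step, one for the $-\alpha$ descent step) so that the sign of $\|n_t\|^{2}$ ends up positive and the sign of $\|m_t\|^{2}$ ends up negative, matching \eqref{sec4:1}; any sign slip would spoil the later use of this inequality when combining with the variance-reduction bounds on $\mathbb{E}\|\nabla_x g(x_t,y_t)-m_t\|^{2}$ and $\mathbb{E}\|\nabla_y g(x_{t+1},y_t)-n_t\|^{2}$.
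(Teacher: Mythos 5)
Your proposal is correct and follows essentially the same route as the paper: two applications of the $\bar l$-smoothness descent lemma (one for the $y$-ascent step at $x_{t+1}$, one for the $x$-descent step at $y_t$), followed by the polarization identity $2\langle a,b\rangle=\|a\|^2+\|b\|^2-\|a-b\|^2$ applied to each inner product, adding the two bounds, and taking expectations. The sign bookkeeping you flag as the only delicate point works out exactly as you describe, matching the coefficients $\tfrac{\beta}{2}(1-\bar l\beta)$ and $-\tfrac{\alpha}{2}(1+\alpha\bar l)$ in \eqref{sec4:1}.
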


\begin{proof}
	Firstly, by Lemma \ref{lem:5} and \eqref{algolstep_y2}, we have
	\begin{align}
		&\quad\mathbb{E}g(x_{t+1}, y_{t+1})-\mathbb{E}g(x_{t+1}, y_{t}) \nonumber\\
		&\geqslant \mathbb{E}\langle\nabla_{y} g(x_{t+1}, y_{t}), y_{t+1}-y_{t}\rangle-\frac{\bar{l}}{2}\mathbb{E}\|y_{t+1}-y_{t}\|^2\nonumber\\
		&\nonumber
		=\mathbb{E}\langle\nabla_{y} g(x_{t+1}, y_{t}), \beta n_{t}\rangle-\frac{\bar{l}}{2} \beta^{2}\mathbb{E}\|n_{t}\|^{2}\\
		&\nonumber
		=\frac{\beta}{2}\mathbb{E}\|n_{t}\|^{2}+\frac{\beta}{2}\mathbb{E}\|\nabla_{y} g(x_{t+1}, y_{t})\|^{2}\\
		&\nonumber\quad-\frac{\beta}{2}\mathbb{E}\|\nabla_{y} g(x_{t+1},y_{t})-n_{t}\|^{2}-\frac{\bar{l} \beta^{2}}{2}\mathbb{E}\|n_t\|^{2}\\
		&=\frac{\beta}{2}\mathbb{E}\|\nabla_{y} g(x_{t}, y_{t})\|^{2}-\frac{\beta}{2}\mathbb{E}\|\nabla_{y} g(x_{t+1}, y_{t})-n_{t}\|^{2}+\frac{\beta}{2}(1-\bar{l} \beta)\mathbb{E}\| n_{t}\|^{2},
		\label{sec4:2}
	\end{align}
	where the second last equality is by a simple fact that $\langle \bar{a},\bar{b}\rangle=\frac{1}{2}\|\bar{a}\|^{2}+\frac{1}{2}\|\bar{b}\|^{2}-\frac{1}{2}\|\bar{a}-\bar{b}\|^{2}$, $\forall \bar{a},\bar{b}$. Similarly, by Lemma \ref{lem:5} and \eqref{algolstep_x2}, we obtain
	\begin{align}
		&\nonumber
		\mathbb{E}g(x_{t+1}, y_{t})-\mathbb{E}g(x_{t}, y_{t}) \\ \nonumber
		\geqslant&\mathbb{E}\langle\nabla_{x} g(x_{t}, y_{t}), x_{t+1}-x_{t}\rangle-\frac{\bar{l}}{2}\mathbb{E}\|x_{t+1}-x_{t}\|^{2}\\
		\nonumber
		=&-\mathbb{E}\langle\nabla_{x}g(x_{t}, y_{t}), \alpha m_{t}\rangle-\frac{\bar{l}}{2} \alpha^{2}\mathbb{E}\|m_{t}\|^{2}\\
		\nonumber
		=&\frac{\alpha}{2}\mathbb{E}\|\nabla_{x} g(x_{t}, y_{t})- m_{t}\|^{2}-\frac{\alpha}{2}\mathbb{E}\|\nabla_{x} g(x_{t}, y_{t})\|^{2}-\frac{\alpha}{2}\mathbb{E}\|m_{t}\|^{2}-\frac{\bar{l}}{2} \alpha^{2}\mathbb{E}\|m_{t}\|^{2}\\
		=&\frac{\alpha}{2}\mathbb{E}\|\nabla_{x} g(x_{t}, y_{t})-m_{t}\|^{2}-\frac{\alpha}{2}\mathbb{E}\|\nabla_{x} g(x_{t}, y_{t})\|^{2}-\frac{\alpha}{2}(1+\alpha \bar{l})\mathbb{E}\| m_{t} \|^{2}.
		\label{sec4:3}
	\end{align}
	We complete the proof by adding \eqref{sec4:2} and \eqref{sec4:3}. 
\end{proof}

\begin{lemma}\label{vrgradient_bound}
	Suppose Assumptions \ref{a2} and \ref{vr_lip_assumption} hold.  Set $p_{t}=\lceil t / q \rceil$.  We have
	\begin{align}
		&\nonumber\mathbb{E}\|\nabla_{x} g_{\mu}(x_{t}, y_{t})-m_{t}\|^{2}\\
		&\leqslant \frac{3d_{1}\bar{l}^{2}}{b} \sum_{i=(p_{t}-1)q}^{t-1}(\mathbb{E}\|x_{i+1}-x_{i}\|^{2}+\mathbb{E}\|y_{i+1}-y_{i}\|^{2})+\sum_{i=(p_{t}-1)q}^{t-1} \frac{3\bar{l}^{2} \mu_{1}^{2}d_{1}^{2}}{2b}+\frac{\sigma^{2}}{B},
		\label{vrgradient_bound_x}\\
		&\nonumber\mathbb{E}\|\nabla_{y} g_{\mu}(x_{t+1}, y_{t})-n_{t}\|^{2}\\
		&\leqslant\frac{3d_{2}\bar{l}^{2}}{b} \sum_{i=(p_{t}-1)q}^{t-1}(\mathbb{E}\|x_{i+2}-x_{i+1}\|^{2}+\mathbb{E}\|y_{i+1}-y_{i}\|^{2})+\sum_{i=(p_{t}-1)q}^{t-1} \frac{3\bar{l}^{2} \mu_{2}^{2}d_{2}^{2}}{2b}+\frac{\sigma^{2}}{B}.
		\label{vrgradient_bound_y}
	\end{align}
	
\end{lemma}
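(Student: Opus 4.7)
\textbf{Proof plan for Lemma \ref{vrgradient_bound}.}
My strategy is the standard SPIDER-style telescoping analysis, carried over to the UniGE setting. I will focus on the bound \eqref{vrgradient_bound_x} for $m_t$; the bound \eqref{vrgradient_bound_y} for $n_t$ is symmetric. Define the error $e_i := m_i - \nabla_x g_{\mu_1}(x_i, y_i)$. The recursion \eqref{mt} yields, for indices $i$ not of the form $kq$,
\begin{align*}
e_i = e_{i-1} + \delta_i,\qquad \delta_i := \bigl[\hat{\nabla}_x G(x_i,y_i;\mathcal{I}_i)-\hat{\nabla}_x G(x_{i-1},y_{i-1};\mathcal{I}_i)\bigr] - \bigl[\nabla_x g_{\mu_1}(x_i,y_i)-\nabla_x g_{\mu_1}(x_{i-1},y_{i-1})\bigr].
\end{align*}
Since $\mathbb{E}_{(U_i,\mathcal{I}_i)}[\hat{\nabla}_x G(x,y;\mathcal{I}_i)]=\nabla_x g_{\mu_1}(x,y)$ by the remark following \eqref{sec2:2}, $\delta_i$ is conditionally mean-zero given the history up through step $i-1$. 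Hence the cross terms vanish in expectation and I obtain the telescoping identity $\mathbb{E}\|e_i\|^2 = \mathbb{E}\|e_{i-1}\|^2 + \mathbb{E}\|\delta_i\|^2$, which I unroll from $i=t$ back to $i=(p_t-1)q$.

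For the base term, at the checkpoint index $i_0:=(p_t-1)q$, $m_{i_0}$ is computed by \eqref{mq} with batch size $B$. By the i.i.d.\ bound $\mathbb{E}\|\hat{\nabla}_x G(x,y;\mathcal{B}) - \nabla_x g_{\mu_1}(x,y)\|^2 \leqslant \sigma^2/B$ noted after Assumption \ref{a2}, this contributes $\sigma^2/B$. For each increment, since the $b$ samples in $\mathcal{I}_i$ are i.i.d., I use the standard variance-of-a-sample-mean reduction to get
\begin{align*}
\mathbb{E}\|\delta_i\|^2 \leqslant \frac{1}{b}\,\mathbb{E}\bigl\|\hat{\nabla}_x G(x_i,y_i;\xi) - \hat{\nabla}_x G(x_{i-1},y_{i-1};\xi)\bigr\|^2,
\end{align*}
where the subtracted mean only decreases variance. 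Summing these bounds across $i$ from $i_0+1$ to $t$ and combining with the base term gives the desired telescoping structure.

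The main work is the per-step UniGE-difference estimate, namely showing
\begin{align*}
\mathbb{E}\bigl\|\hat{\nabla}_x G(x_i,y_i;\xi) - \hat{\nabla}_x G(x_{i-1},y_{i-1};\xi)\bigr\|^2 \leqslant 3 d_1 \bar{l}^2\bigl(\mathbb{E}\|x_i-x_{i-1}\|^2+\mathbb{E}\|y_i-y_{i-1}\|^2\bigr)+\tfrac{3}{2}\bar{l}^2\mu_1^2 d_1^2.
\end{align*}
I would prove this by inserting and subtracting $\nabla_x G(x_i,y_i;\xi)$ and $\nabla_x G(x_{i-1},y_{i-1};\xi)$, splitting into three pieces, and applying the Cauchy-Schwarz (or $\|a+b+c\|^2 \leqslant 3\|a\|^2+3\|b\|^2+3\|c\|^2$) inequality. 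The middle piece is bounded by $3\bar{l}^2(\|x_i-x_{i-1}\|^2+\|y_i-y_{i-1}\|^2)$ using the componentwise Lipschitz property in Assumption \ref{vr_lip_assumption}; the multiplicative factor $d_1$ will then appear because the two UniGE-minus-true-gradient pieces must be re-expressed via \eqref{gradboundx} of Lemma \ref{zoagda_lemma} (applied componentwise to $G(\cdot,\cdot;\xi)$, which has the same smoothness constant $\bar{l}$), producing the $3d_1\bar{l}^2(\cdot)$ scaling together with the bias remainder $\tfrac{3}{2}\bar{l}^2\mu_1^2 d_1^2$.

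The hardest step in this plan is the per-step UniGE-difference estimate, because the dimension blow-up $d_1$ and the residual term $\tfrac{3}{2}\bar{l}^2\mu_1^2 d_1^2$ both must be squeezed out of the same decomposition with the correct constants; getting the coefficients to match the form in the statement requires applying Lemma \ref{zoagda_lemma} in the coupled (same-$u$) regime rather than to a single gradient estimator. Once that estimate is in hand, plugging it into $\sum_{i=i_0+1}^{t-1} \mathbb{E}\|\delta_i\|^2$ and adding the base variance $\sigma^2/B$ yields \eqref{vrgradient_bound_x}; the analogous argument, with $x_{t+1}$ replacing $x_t$ inside the $n_t$ recursion \eqref{nt} and with $d_2,\mu_2,\beta$ in place of $d_1,\mu_1,\alpha$, gives \eqref{vrgradient_bound_y}.
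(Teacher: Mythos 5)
Your overall architecture is the same as the paper's: the error recursion $e_i=e_{i-1}+\delta_i$ with conditionally mean-zero increments, the resulting identity $\mathbb{E}\|e_i\|^2=\mathbb{E}\|e_{i-1}\|^2+\mathbb{E}\|\delta_i\|^2$, the reduction of the mini-batch term by the factor $1/b$ together with dropping the subtracted mean, the base contribution $\sigma^2/B$ at the checkpoint, and the shift to $\|x_{i+2}-x_{i+1}\|^2$ in the $y$-bound all match the paper's proof step for step.

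The gap is in the one step you yourself flag as the hardest: the per-sample estimate $\mathbb{E}\|\hat{\nabla}_x G(x_i,y_i;\xi)-\hat{\nabla}_x G(x_{i-1},y_{i-1};\xi)\|^2 \leqslant 3d_1\bar l^2(\mathbb{E}\|x_i-x_{i-1}\|^2+\mathbb{E}\|y_i-y_{i-1}\|^2)+\tfrac{3}{2}\bar l^2\mu_1^2 d_1^2$. The decomposition you propose, inserting $\nabla_xG(x_i,y_i;\xi)$ and $\nabla_xG(x_{i-1},y_{i-1};\xi)$ and bounding the three pieces with $\|a+b+c\|^2\leqslant 3\|a\|^2+3\|b\|^2+3\|c\|^2$, cannot yield this bound: the two outer pieces are of the form $\mathbb{E}_u\|\hat{\nabla}_xG(x,y;\xi)-\nabla_xG(x,y;\xi)\|^2$, and by the analogue of \eqref{gradboundx} these are of order $d_1\|\nabla_xG(x,y;\xi)\|^2+\mu_1^2d_1^2\bar l^2$. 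The term proportional to $d_1\|\nabla_xG\|^2$ does not cancel and is not controlled by iterate differences, so this split leaves a residual of the size of the gradient itself. (Lemma \ref{zoagda_lemma} makes $\|\nabla_xf_{\mu_1}-\nabla_xf\|$ small, i.e.\ the \emph{smoothed} gradient versus the true one; the single-sample estimator itself is not close to the true gradient in mean square.) The correct argument must keep the two finite-difference quotients together under the common direction $u$: writing $G(x+\mu_1u,y;\xi)-G(x,y;\xi)=\mu_1\int_0^1\langle\nabla_xG(x+s\mu_1u,y;\xi),u\rangle\,ds$ at both iterates, subtracting before taking norms, and using that for $u$ uniform on the unit sphere $\mathbb{E}_u[\langle w,u\rangle^2\|u\|^2]=\|w\|^2/d_1$, which is what trades a factor $d_1^2$ for the single $d_1$ and produces the $\mu_1^2d_1^2$ remainder. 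The paper does not reprove this estimate; it imports it as inequality (102) in the proof of Lemma 29 of \cite{huang2020accelerated}. Your parenthetical remark that the bound must be obtained ``in the coupled (same-$u$) regime'' names the right phenomenon, but the proof you outline does not implement it, and as written that step fails.
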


\begin{proof}
	Denote $\nabla_xG_{I}(x,y)=\nabla_{x} g_{\mu}(x, y)-\hat{\nabla}_{x} G(x, y ; I)$ for a given index set $I$. Firstly, by \eqref{mt}, for any index $j_t$ that satisfies $(p_{t}-1)q+1\leq j_t \leq t-1$,  we have
	\begin{align}
		&\mathbb{E} \|\nabla_{x} g_{\mu}(x_{j_t},y_{j_t})-m_{j_t} \|^{2}\nonumber \\
		=&\mathbb{E}\|\nabla_{x} g_{\mu}(x_{j_t}, y_{j_t})-m_{j_t-1}-(m_{j_t}-m_{j_t-1})\|^{2}\nonumber\\
		=&\mathbb{E} \|\nabla_{x} g_{\mu}(x_{j_t-1}, y_{j_t-1})-m_{j_t-1}+\nabla_xG_{I_{j_t}}(x_{j_t}, y_{j_t})-\nabla_xG_{I_{j_t}}(x_{j_t-1}, y_{j_t-1})\|^{2}\nonumber\\
		=&\mathbb{E}\|\nabla_{x} g_{\mu}(x_{j_t-1}, y_{j_t-1})-m_{j_t-1}\|^{2}\nonumber\\
		&+\mathbb{E} \| \nabla_xG_{I_{j_t}}(x_{j_t}, y_{j_t})-\nabla_xG_{I_{j_t}}(x_{j_t-1}, y_{j_t-1})\|^{2},\label{add:4.1}
	\end{align}
	where the last equality is due to $\mathbb{E}[\nabla_xG_{I_{j_t}}(x,y)]=0$. Note that if $\left\{\xi_i\right\}_{i=1}^b$ are i.i.d. random variables with zero mean, then  $\mathbb{E}\|\frac{1}{b} \sum_{i=1}^{b} \xi_{i}\|^{2}=\frac{1}{b} \mathbb{E}\|\xi_{i}\|^{2}$ for any $i \in \{1,\cdots,b\}$. Then, by using this fact, for any element $\xi_{j_{t}}^{\prime}\in I_{j_t}$, we have
	\begin{align}
		& \mathbb{E}\|\nabla_xG_{I_{j_t}}(x_{j_t}, y_{j_t})-\nabla_xG_{I_{j_t}}(x_{j_t-1}, y_{j_t-1})\|^{2}\nonumber\\
		=&\frac{1}{b} \mathbb{E} \| \nabla_{x} g_{\mu}(x_{j_t}, y_{j_t})-\hat{\nabla}_{x} G(x_{j_t},y_{j_t};\xi_{j_t}^{\prime})\nonumber\\
		&-\nabla_{x} g_{\mu}(x_{j_t-1}, y_{j_t-1})+\hat{\nabla}_{x} G(x_{j_t-1}, y_{j_t-1};\xi_{j_{t}}^{\prime})\|^{2}\nonumber\\
		\leqslant& \frac{1}{b} \mathbb{E} \| \hat{\nabla}_{x} G(x_{j_t}, y_{j_t};\xi_{j_{t}}^{\prime})-\hat{\nabla}_{x} G(x_{j_t-1},x_{j_t-1};\xi_{j_t}^{\prime})\|^{2},\label{add:4.2}
	\end{align}
	where the last inequality is due to $\mathbb{E}\|\xi-\mathbb{E}[\xi]\|^{2}=\mathbb{E}\|\xi\|^{2}-\|\mathbb{E}[\xi]\|^{2}\leqslant\mathbb{E}\|\xi\|^{2}$. On the other hand, by Assumption \ref{vr_lip_assumption} and (102) in the proof of Lemma 29 in \citep{huang2020accelerated}, we have 
	\begin{align}
		& \mathbb{E} \| \hat{\nabla}_{x} G(x_{j_t}, y_{j_t};\xi_{j_t}^{\prime})-\hat{\nabla}_{x} G(x_{j_t-1},y_{j_t-1};\xi_{j_t}^{\prime})\|^{2} \nonumber\\
		&\leqslant \frac{3\bar{l}^{2} \mu_{1}^{2} d_{1}^{2}}{2}+3 d_{1}\bar{l}^{2} \mathbb{E}(\|x_{j_t}-x_{j_t-1}\|^{2}+\|y_{j_t}-y_{j_t-1}\|^{2}).\label{add:4.3}
	\end{align}
	Combing \eqref{add:4.1}, \eqref{add:4.2} and \eqref{add:4.3}, we obtain
	\begin{align}
		&\nonumber\mathbb{E}\|\nabla_{x} g_{\mu}(x_{j_t}, y_{j_t})-m_{j_t}\|^{2}\\\nonumber
		&
		\leqslant\mathbb{E}\|\nabla_{x}g_{\mu}(x_{j_t-1}, y_{j_t-1})-m_{j_t-1}\|^{2}\\&
		\quad+\frac{1}{b}[\frac{3 \bar{l}^{2}\mu_{1}^{2}d_{1}^{2}}{2}+3d_{1}\bar{l}^{2}\mathbb{E} (\|x_{j_t}- x_{j_t-1}\|^{2}+\|y_{j_t}-y_{j_t-1} \|^{2})].
		\label{sec4:4}			
	\end{align}
	Similar to the proof of \eqref{sec4:4}, we have		
	\begin{align}
		&\mathbb{E} \| \nabla_{y} g_{\mu}(x_{j_t+1}, y_{j_t})-n_{j_t} \|^{2}\nonumber\\
		\leqslant&\nonumber\mathbb{E}\|\nabla_{y}g_{\mu}(x_{j_t}, y_{j_t-1})-n_{j_t-1}\|^{2}\\
		&
		+\frac{1}{b}[\frac{3 \bar{l}^{2}\mu_{2}^{2}d_{2}^{2}}{2}+3d_{2}\bar{l}^{2}\mathbb{E} (\|x_{j_t+1}- x_{j_t}\|^{2}+\|y_{j_t}-y_{j_t-1} \|^{2})].
		\label{sec4:5}
	\end{align}
		%
	Telescoping \eqref{sec4:4} and \eqref{sec4:5} over $j_t$ from $(p_{t}-1)q+1$ to $t$, we have
	\begin{align}
		&\quad\nonumber\mathbb{E}\|\nabla_{x} g_{\mu}(x_{t}, y_{t})-m_{t}\|^{2}\\
		&\nonumber\leqslant \mathbb{E}\|\nabla_{x}g_{\mu}(x_{(p_{t}-1)q}, y_{(p_{t}-1)q})-m_{(p_{t}-1)q}\|^{2}\\
		&+\frac{3d_{1} \bar{l}^{2}}{b}\sum_{i=(p_{t}-1)q}^{t-1}(\mathbb{E}\|x_{i+1}-x_{i}\|^{2}+\mathbb{E}\|y_{i+1}-y_{i}\|^{2}) +\sum_{i=(p_{t}-1)q}^{t-1} \frac{3l^{2} \mu_{1}^{2}d_{1}^{2}}{2b}\label{add:4.4}
	\end{align}
	and
	\begin{align}
		&\quad\nonumber\mathbb{E}\|\nabla_{y} g_{\mu}(x_{t+1}, y_{t})-n_{t}\|^{2}\\
		&\nonumber\leqslant\mathbb{E}\|\nabla_{y}g_{\mu}(x_{(p_{t}-1)q+1}, y_{(p_{t}-1)q})-n_{(p_{t}-1)q}\|^{2}\\
		&+ \frac{3d_{2} \bar{l}^{2}}{b}\sum_{i=(p_{t}-1)q}^{t-1}(\mathbb{E}\|x_{i+2}-x_{i+1}\|^{2}+\mathbb{E}\|y_{i+1}-y_{i}\|^{2})+\sum_{i=(p_{t}-1)q}^{t-1} \frac{3l^{2} \mu_{2}^{2}d_{2}^{2}}{2b}.\label{add:4.5}
	\end{align}
	After plugging \eqref{mq} and \eqref{nq} into \eqref{add:4.4} and \eqref{add:4.5} respectively, by Assumption \ref{a2}, and combing \eqref{add:4.4} and \eqref{add:4.5}, we complete the proof.
\end{proof}

\begin{lemma}
	Suppose Assumptions \ref{psi}, \ref{a2} and \ref{vr_lip_assumption} hold. Let $F_{t}=\frac{3}{2} \Psi(x_{t})-\frac{1}{2} g(x_{t}, y_{t})$. Then we have 
	\begin{align}
		&\nonumber
		\mathbb{E}F_{t}-\mathbb{E}F_{t+1}\\
		\nonumber
		\geqslant&\frac{\alpha}{4}\mathbb{E}\|\nabla \Psi(x_{t})\|^{2}+[\frac{\alpha}{2}(1-2\alpha \bar{L})-\frac{\beta}{4} \bar{l}^{2} \alpha^{2}]\mathbb{E}\|m_{t}\|^{2}\\
		&\nonumber
		-\frac{15\alpha d_{1}\bar{l}^{2}}{2b} \sum_{i=(p_{t}-1)q}^{t-1}(\atwo\mathbb{E}\|m_{i}\|^{2}+\beta^{2}\mathbb{E}\|n_{i}\|^{2})\\
		&\nonumber
		-\frac{3\beta d_{2}\bar{l}^{2}}{2b} \sum_{i=(p_{t}-1)q}^{t-1}[(2\atwo+6d_{1}\bar{l}^{2}\alpha^{4}) \mathbb{E}\|m_{i}\|^{2}+(6d_{1}\bar{l}^{2}\atwo\beta^{2}+\beta^{2}) \mathbb{E}\|n_{i}\|^{2}]\\
		&\nonumber
		+(\frac{\beta}{8} -2\alpha\kappa^{2})\mathbb{E}\|\nabla_{y} g(x_{t}, y_{t})\|^{2}	+\frac{\beta}{4} (1-\bar{l}\beta)\mathbb{E}\|n_{t}\|^{2}-\frac{5\alpha\mu_{1}^{2} \bar{l}^{2} d_{1}^{2}}{8}- \frac{\beta\mu_{2}^{2}\bar{l}^{2}d_{2}^{2}}{8}\\
		&
		-\sum_{i=(p_{t}-1)q}^{t-1} (\frac{9\mu_{1}^{2}\atwo\beta d_{1}^{2}d_{2}\bar{l}^{4}}{2b}+\frac{15\alpha \bar{l}^{2} \mu_{1}^{2}d_{1}^{2}}{4b}+\frac{3\beta \bar{l}^{2} \mu_{2}^{2}d_{2}^{2}}{4b})	-(\frac{5\alpha}{2}+\frac{\beta}{2})\frac{\sigma^{2}}{B}.
		\label{add:6}
	\end{align}
\end{lemma}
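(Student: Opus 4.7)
The strategy is to bound $\tfrac{3}{2}[\mathbb{E}\Psi(x_t)-\mathbb{E}\Psi(x_{t+1})]$ and $\tfrac{1}{2}[\mathbb{E}g(x_{t+1},y_{t+1})-\mathbb{E}g(x_t,y_t)]$ separately and then add them, mimicking the deterministic argument that produced \eqref{the1} but carrying the variance-reduction tracking errors through. For the $\Psi$-piece I would apply $\bar L$-smoothness (Lemma~\ref{lem:5}) to the update $x_{t+1}=x_t-\alpha m_t$ together with the polarization identity $-\langle a,b\rangle=\tfrac12(\|a-b\|^2-\|a\|^2-\|b\|^2)$, producing a lower bound containing $\tfrac{3\alpha}{4}\mathbb{E}\|\nabla\Psi(x_t)\|^2$ and $\tfrac{3\alpha}{4}(1-\bar L\alpha)\mathbb{E}\|m_t\|^2$ together with a $-\tfrac{3\alpha}{4}\mathbb{E}\|\nabla\Psi(x_t)-m_t\|^2$ correction. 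Halving \eqref{sec4:1} supplies the $g$-piece directly.

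Next I would handle the cross-gradient terms. Young's inequality $\|\nabla\Psi(x_t)-m_t\|^2\le 2\|\nabla\Psi(x_t)-\nabla_x g(x_t,y_t)\|^2+2\|\nabla_x g(x_t,y_t)-m_t\|^2$ combined with the PL-based bound $\|\nabla\Psi(x_t)-\nabla_x g(x_t,y_t)\|^2\le\kappa^2\|\nabla_y g(x_t,y_t)\|^2$ (the $g$-analog of Lemma~\ref{g smooth}) turns the $\Psi$-to-$m$ gap into $-\tfrac{3\alpha}{2}\kappa^2\|\nabla_y g(x_t,y_t)\|^2-\tfrac{3\alpha}{2}\|\nabla_x g(x_t,y_t)-m_t\|^2$, which absorbs the $+\tfrac{\alpha}{4}\|\nabla_x g-m_t\|^2$ term from Lemma~\ref{vrlemma7} into a net $-\tfrac{5\alpha}{4}\|\nabla_x g(x_t,y_t)-m_t\|^2$ residue. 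Similarly $\|\nabla_x g\|^2\le 2\|\nabla\Psi\|^2+2\kappa^2\|\nabla_y g\|^2$ absorbs half of the $\|\nabla\Psi\|^2$ mass (leaving the target $\tfrac{\alpha}{4}$) and adds another $-\tfrac{\alpha}{2}\kappa^2\|\nabla_y g\|^2$ penalty. The one-step comparison $\|\nabla_y g(x_{t+1},y_t)\|^2\ge\tfrac12\|\nabla_y g(x_t,y_t)\|^2-\bar l^2\alpha^2\|m_t\|^2$ (analog of \eqref{sec3:7} using Assumption~\ref{vr_lip_assumption}) then combines with the accumulated $-2\alpha\kappa^2\|\nabla_y g(x_t,y_t)\|^2$ penalty to deliver the target $(\tfrac{\beta}{8}-2\alpha\kappa^2)\|\nabla_y g(x_t,y_t)\|^2$ coefficient, while the induced $-\tfrac{\beta}{4}\bar l^2\alpha^2\|m_t\|^2$ merges with prior $\|m_t\|^2$ contributions via $\bar l\le\bar L$ into $\tfrac{\alpha}{2}(1-2\alpha\bar L)-\tfrac{\beta}{4}\bar l^2\alpha^2$, matching \eqref{add:6}.

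The final step is to convert the two surviving tracking errors $-\tfrac{5\alpha}{4}\|\nabla_x g-m_t\|^2$ and $-\tfrac{\beta}{4}\|\nabla_y g(x_{t+1},y_t)-n_t\|^2$ into the summation form of \eqref{add:6}. Splitting off the smoothing bias via \eqref{gradgap_x}-\eqref{gradgap} yields the $-\tfrac{5\alpha\mu_1^2 d_1^2\bar l^2}{8}$ and $-\tfrac{\beta\mu_2^2 d_2^2\bar l^2}{8}$ terms, leaving the ``true'' errors $\|\nabla_x g_{\mu_1}-m_t\|^2$ and $\|\nabla_y g_{\mu_2}-n_t\|^2$ to be bounded by Lemma~\ref{vrgradient_bound}. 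Substituting $\|x_{i+1}-x_i\|^2=\alpha^2\|m_i\|^2$ and $\|y_{i+1}-y_i\|^2=\beta^2\|n_i\|^2$ into the $x$-variance bound is immediate; the main obstacle is the $y$-variance bound's $\|x_{i+2}-x_{i+1}\|^2=\alpha^2\|m_{i+1}\|^2$ term, which involves the shifted index $i+1$. I would resolve this by $\|m_{i+1}\|^2\le 2\|m_i\|^2+2\|m_{i+1}-m_i\|^2$ together with the per-step drift $\mathbb{E}\|m_{i+1}-m_i\|^2\le\tfrac{3\bar l^2\mu_1^2 d_1^2}{2}+3d_1\bar l^2(\alpha^2\mathbb{E}\|m_i\|^2+\beta^2\mathbb{E}\|n_i\|^2)$, derived from (102) of~\cite{huang2020accelerated} with Jensen's inequality on the mini-batch cancelling the $1/b$ factor. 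This recursive substitution reproduces exactly the mixed coefficients $(2\alpha^2+6d_1\bar l^2\alpha^4)$ and $(6d_1\bar l^2\alpha^2\beta^2+\beta^2)$, and its bias contribution $3\alpha^2\bar l^2\mu_1^2 d_1^2$ multiplied by the outer factor $\tfrac{3\beta d_2\bar l^2}{2b}$ produces precisely the extra $\tfrac{9\mu_1^2\alpha^2\beta d_1^2 d_2\bar l^4}{2b}$ term; the one-shot restart variances from \eqref{mq}, \eqref{nq} deliver $-(\tfrac{5\alpha}{2}+\tfrac{\beta}{2})\tfrac{\sigma^2}{B}$, completing \eqref{add:6}.
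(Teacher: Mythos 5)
Your proposal is correct and follows essentially the same route as the paper's proof: the same split of $\mathbb{E}F_t-\mathbb{E}F_{t+1}$ into the $\bar L$-smoothness bound on $\Psi$ and half of Lemma \ref{vrlemma7}, the same PL-based $\kappa^2$ absorption and one-step comparison for $\|\nabla_y g(x_{t+1},y_t)\|^2$, the same bias/variance split via \eqref{gradgap_x}--\eqref{gradgap} and Lemma \ref{vrgradient_bound}, and the same treatment of the shifted index via $\|m_{i+1}\|^2\le 2\|m_i\|^2+2\|m_{i+1}-m_i\|^2$ leading to \eqref{m_i}. All the coefficients you track ($-\tfrac{5\alpha}{4}$, $\tfrac{\beta}{8}-2\alpha\kappa^2$, the $\tfrac{9\mu_1^2\alpha^2\beta d_1^2d_2\bar l^4}{2b}$ term, and the restart variances) match the paper's.
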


\begin{proof}
	By $\bar{L}$-smoothness of $\Psi$ in Lemma \ref{lem:5}, \eqref{algolstep_x2} and $\langle \bar{a},\bar{b}\rangle=\frac{1}{2}\|\bar{a}\|^{2}+\frac{1}{2}\|\bar{b}\|^{2}-\frac{1}{2}\|\bar{a}-\bar{b}\|^{2}$, we get
	\begin{align}
		&\nonumber\mathbb{E}\Psi(x_{t+1})\\
		\nonumber \leqslant& \mathbb{E}\Psi(x_{t})+\mathbb{E}\langle\nabla \Psi(x_{t}), x_{t+1}-x_{t}\rangle+\frac{\bar{L}}{2}\mathbb{E}\|x_{t+1}-x_{t}\|^{2}\\
		\nonumber
		=&\mathbb{E}\Psi(x_{t})-\mathbb{E}\langle\nabla\Psi(x_{t}), \alpha m_{t}\rangle+\frac{\bar{L}}{2} \alpha^{2}\mathbb{E}\|m_{t}\|^{2}\\
		\nonumber
		=&\mathbb{E}\Psi(x_{t})+\frac{\alpha}{2}\mathbb{E}\|\nabla \Psi(x_{t})-m_{t}\|^{2}-\frac{\alpha}{2}\mathbb{E}\|\nabla\Psi (x_{t})\|^{2}-\frac{\alpha}{2}\mathbb{E}\| m_{t}\|^{2}+\frac{\bar{L}}{2} \alpha^{2}\mathbb{E}\| m_{t} \|^{2}\\
		\nonumber
		=&\mathbb{E}\Psi(x_{t})+\frac{\alpha}{2}\mathbb{E}\|\nabla \Psi(x_{t})-m_{t}\|^{2}-\frac{\alpha}{2}\mathbb{E}\|\nabla \Psi(x_{t})\|^{2}-\frac{\alpha}{2}(1-\alpha \bar{L})\mathbb{E}\|m_{t}\|^{2}\\
		\nonumber
		\leqslant&\mathbb{E}\Psi(x_{t})+\alpha\mathbb{E}\|\nabla \Psi(x_{t})-\nabla_{x} g(x_{t}, y_{t})\|^{2}+\alpha\mathbb{E}\|\nabla_{x} g(x_{t}, y_{t})-m_{t}\|^{2}\\
		&-\frac{\alpha}{2}\mathbb{E}\|\nabla \Psi(x_{t})\|^{2}-\frac{\alpha}{2}(1-\alpha \bar{L})\mathbb{E}\|m_{t} \|^{2},
		\label{sec4:6}
	\end{align}
	where in the last inequality we use the Cauchy-Schwarz inequality. Next, by mutiplying $\frac{3}{2}$ and $-\frac{1}{2}$ on both sides of \eqref{sec4:6} and \eqref{sec4:1} respectively and then adding them together, after rearranging the terms and by the definition of $F_{t}$, we can obtain that
	\begin{align}
		&\nonumber
		\mathbb{E}F_{t}-\mathbb{E}F_{t+1}\\
		\nonumber
		\geqslant&\frac{3\alpha}{4}\mathbb{E}\|\nabla \Psi(x_{t})\|^{2}+\frac{\alpha}{4}(2-3\alpha \bar{L}-\alpha \bar{l})\mathbb{E}\|m_{t}\|^{2} -\frac{\alpha}{4} \mathbb{E}\|\nabla_{x} g(x_{t}, y_{t})\|^{2}\\
		\nonumber
		&-\frac{3\alpha}{2}\mathbb{E}\|\nabla \Psi(x_{t})-\nabla_{x} g(x_{t}, y_{t})\|^{2}-\frac{5\alpha}{4}\mathbb{E}\|\nabla_{x} g(x_{t}, y_{t})-m_{t}\|^{2} \\
		\nonumber
		&+\frac{\beta}{4} \mathbb{E}\|\nabla_{y} g(x_{t+1},y_{t})\|^{2}-\frac{\beta}{4} \mathbb{E}\|\nabla_{y} g(x_{t+1}, y_{t})-n_{t}\|^{2}+\frac{\beta}{4} (1-\bar{l} \beta)\mathbb{E}\|n_{t}\|^{2}\\
		\nonumber
		\geqslant&\frac{\alpha}{4}\mathbb{E}\| \nabla \Psi( x_{t})\|^{2}+\frac{\alpha}{2}(1-2\alpha \bar{L})\mathbb{E}\| m_{t}\|^{2}-2\alpha\mathbb{E}\|\nabla\Psi(x_{t})-\nabla_{x}g(x_{t}, y_{t})\|^{2}\\
		\nonumber
		&-\frac{5\alpha}{4}\mathbb{E}\|\nabla_{x} g(x_{t}, y_{t})-m_{t} \|^{2}+\frac{\beta}{4} \mathbb{E}\|\nabla_{y} g(x_{t+1},y_{t})\|^{2}\\
		&-\frac{\beta}{4} \mathbb{E}\|\nabla_{y} g(x_{t+1}, y_{t})-n_{t}\|^{2}+\frac{\beta}{4}(1-\bar{l}\beta)\mathbb{E}\| n_{t} \|^{2},\label{add:4.10}
	\end{align}
	where the second inequality is by $\bar{l}\leqslant \bar{L}$ and the fact that $-\frac{1}{2}\|\nabla_{x} g(x_{t}, y_{t})\|^{2}\geqslant -\|\nabla \Psi(x_{t})-\nabla_{x} g(x_{t}, y_{t})\|^{2}-\|\nabla \Psi(x_{t})\|^{2}$. Next, by the Cauchy-Schwarz inequality, \eqref{algolstep_x2} and  Assumption \ref{vr_lip_assumption}, we have
	\begin{align}
		&\mathbb{E}\|\nabla_{y} g(x_{t+1},y_{t})\|^{2}\nonumber\\
		=&\mathbb{E} \|\nabla_{y} g(x_{t+1},y_{t})-\nabla_{y} g(x_{t}, y_{t})+\nabla_{y} g(x_{t},y_{t})\|^{2}\nonumber\\
		\geqslant& \frac{\mathbb{E}\|\nabla_{y} g(x_{t}, y_{t})\|^{2}}{2}-\mathbb{E}\|\nabla_{y} g(x_{t+1}, y_{t})-\nabla_{y} g(x_{t}, y_{t})\|^{2}\nonumber\\
		\geqslant& \frac{\mathbb{E}\|\nabla_{y} g(x_{t}, y_{t})\|^{2}}{2}-\bar{l}^{2}\atwo\mathbb{E}\|m_{t}\|^{2}.\label{add:4.6}
	\end{align}
	By \eqref{phigap} in Lemma \ref{g smooth}, we get
	\begin{align}
		\mathbb{E}\|\nabla\Psi(x_{t})-\nabla_{x} g(x_{t}, y_{t}) \|^{2} \leqslant \kappa^2 \mathbb{E}\|\nabla_{y} g(x_{t}, y_{t})\|^{2}.\label{add:4.7}
	\end{align}
	By the Cauchy-Schwarz inequality, we have 
	\begin{align}
		\mathbb{E}\|\nabla_{x} g(x_{t}, y_{t})-m_{t}\|^{2}&\leqslant2\mathbb{E}\|\nabla_{x} g(x_{t}, y_{t})-\nabla_{x} g_{\mu}(x_{t}, y_{t})\|^{2} \nonumber\\
		&\quad +2\mathbb{E}\|\nabla_{x} g_{\mu}(x_{t}, y_{t})-m_{t}\|^{2},\label{add:4.8}\\
		\mathbb{E}\|\nabla_{y} g(x_{t+1}, y_{t})-n_{t}\|^{2}&\leqslant 2\mathbb{E}\|\nabla_{y} g(x_{t+1}, n_{t})-\nabla_{y} g_{\mu}(x_{t+1}, y_{t})\|^{2}\nonumber\\
		&\quad +2\mathbb{E}\|\nabla_{y} g_{\mu}(x_{t+1}, y_{t})-n_{t}\|^{2}.\label{add:4.9}
	\end{align}
	By plugging \eqref{add:4.6}-\eqref{add:4.9} into \eqref{add:4.10}, we have
	\begin{align}
		&\nonumber
		\mathbb{E}F_{t}-\mathbb{E}F_{t+1}\\
		\nonumber
		\geqslant&\frac{\alpha}{4}\mathbb{E}\|\nabla \Psi(x_{t})\|^{2}+[\frac{\alpha}{2}(1-2\alpha \bar{L})-\frac{\beta}{4} \bar{l}^{2} \alpha^{2}]\mathbb{E}\|m_{t}\|^{2}\\
		&\nonumber
		-\frac{5\alpha}{2}\mathbb{E}\|\nabla_{x} g(x_{t}, y_{t})-\nabla_{x} g_{\mu}(x_{t}, y_{t})\|^{2}-\frac{5\alpha}{2}\mathbb{E}\|\nabla_{x} g_{\mu}(x_{t}, y_{t})-m_{t}\|^{2}\\
		&\nonumber
		-\frac{\beta}{2}\mathbb{E}\|\nabla_{y} g(x_{t+1}, y_{t})-\nabla_{y} g_{\mu}(x_{t+1}, y_{t})\|^{2}-\frac{\beta}{2}\mathbb{E}\|\nabla_{y} g_{\mu}(x_{t+1}, y_{t})-n_{t}\|^{2}\\
		&\nonumber
		+[\frac{\beta}{8} -2\alpha\kappa^{2}]\mathbb{E}\|\nabla_{y} g(x_{t}, y_{t})\|^{2}+\frac{\beta}{4} (1-\bar{l}\beta)\mathbb{E}\|n_{t}\|^{2}\\
		\nonumber
		\geqslant&\frac{\alpha}{4}\mathbb{E}\|\nabla \Psi(x_{t})\|^{2}+[\frac{\alpha}{2}(1-2\alpha \bar{L})-\frac{\beta}{4} \bar{l}^{2} \alpha^{2}]\mathbb{E}\|m_{t}\|^{2}\\
		&\nonumber
		-\frac{15\alpha d_{1}\bar{l}^{2}}{2b} \sum_{i=(p_{t}-1)q}^{t-1}(\mathbb{E}\|x_{i+1}-x_{i}\|^{2}+\mathbb{E}\|y_{i+1}-y_{i}\|^{2})-\sum_{i=(p_{t}-1)q}^{t-1} \frac{15\alpha \bar{l}^{2} \mu_{1}^{2}d_{1}^{2}}{4b}\\
		&\nonumber
		-\frac{3\beta d_{2}\bar{l}^{2}}{2b} \sum_{i=(p_{t}-1)q}^{t-1}(\mathbb{E}\|x_{i+2}-x_{i+1}\|^{2}+\mathbb{E}\|y_{i+1}-y_{i}\|^{2})-\sum_{i=(p_{t}-1)q}^{t-1} \frac{3\beta \bar{l}^{2} \mu_{2}^{2}d_{2}^{2}}{4b}\\
		&\nonumber
		-\frac{5\alpha\mu_{1}^{2} \bar{l}^{2} d_{1}^{2}}{8}- \frac{\beta\mu_{2}^{2}\bar{l}^{2}d_{2}^{2}}{8}+(\frac{\beta}{8} -2\alpha\kappa^{2})\mathbb{E}\|\nabla_{y} g(x_{t}, y_{t})\|^{2}+\frac{\beta}{4} (1-\bar{l}\beta)\mathbb{E}\|n_{t}\|^{2}\\
		&
		-(\frac{5\alpha}{2}+\frac{\beta}{2})\frac{\sigma^{2}}{B},
		\label{sec4:7}
	\end{align}
	where the last inequaliy is by Lemma \ref{vrgradient_bound} and \eqref{gradgap} in Lemma \ref{funconvexlip}. Next, we estimate the upper bound for $\mathbb{E}\|x_{i+2}-x_{i+1}\|^{2}$, $\mathbb{E}\|x_{i+1}-x_{i}\|^{2}$ and $\mathbb{E}\|y_{i+1}-y_{i}\|^{2}$ in the right hand side of \eqref{sec4:7} when $(p_{t}-1)q\leq i \leq t-1$.  By \eqref{algolstep_x2}, \eqref{mt} and the Cauchy-Schwarz inequality, we have
	\begin{align}
		&\nonumber\mathbb{E}\|x_{i+2}-x_{i+1}\|^{2}\\
		=&\nonumber\atwo\mathbb{E}\|m_{i+1}\|^{2}\leqslant 2\atwo\mathbb{E}\|m_{i}\|^{2}+2\atwo\mathbb{E}\| m_{i+1}-m_{i} \|^{2}\\
		=&\nonumber
		2\atwo\mathbb{E}\|m_{i}\|^{2}+2\atwo\mathbb{E} \| \hat{\nabla}_{x} G(x_{i+1}, y_{i+1} ; I_{i})-\hat{\nabla}_{x}G(x_{i}, y_{i ;} I_{i})\|^{2}\\
		\leqslant&
		2\atwo\mathbb{E}\|m_{i}\|^{2}+\frac{2\atwo}{b} \sum_{j=1}^{b}\mathbb{E}\| \hat{\nabla}_{x} G(x_{i+1},y_{i+1}; \xi_{i}^{j})-\hat{\nabla}_{x}G(x_{i}, y_{i};\xi_{i}^{j}) \|^{2},\label{add:4.11}
	\end{align}
	where the last inequality is by the fact that $\mathbb{E}\|\sum_{i=1}^{b}\eta_i\|^{2}\leqslant b\sum_{i=1}^{b}\mathbb{E}\|\eta_{i}\|^{2}$ for i.i.d random variables $\{\eta_1,\cdots, \eta_b\}$. 
	By (102) in the proof of Lemma 29 in \citep{huang2020accelerated}, we have that 
	\begin{align}
		&\mathbb{E} \| \hat{\nabla}_{x} G(x_{i+1}, y_{i+1};\xi_{i}^{\prime})-\hat{\nabla}_{x} G(x_{i},y_{i};\xi_{i}^{\prime})\|^{2}\nonumber\\ 
		&\leqslant \frac{3\bar{l}^{2} \mu_{1}^{2} d_{1}^{2}}{2}+3 d_{1}\bar{l}^{2} \mathbb{E}(\|x_{i+1}-x_{i}\|^{2}+\|y_{i+1}-y_{i}\|^{2}).\label{add:4.12}
	\end{align}
	By pluggging \eqref{add:4.12} into \eqref{add:4.11}, and using \eqref{algolstep_x2} and \eqref{algolstep_y2}, we obtain
	\begin{align}
		&	\mathbb{E}\|x_{i+2}-x_{i+1}\|^{2}	\nonumber\\
		\leqslant&(2\atwo+6d_{1}\bar{l}^{2}\alpha^{4}) \mathbb{E}\|m_{i}\|^{2}+6d_{1}\bar{l}^{2}\atwo\beta^{2} \mathbb{E}\|n_{i}\|^{2}+3\bar{l}^{2}\atwo\mu_{1}^{2}d_{1}^{2}.\label{m_i}
	\end{align}
	The proof is then completed by plugging \eqref{m_i} into \eqref{sec4:7}, and using $\mathbb{E}\|x_{i+1}-x_{i}\|^{2}=\atwo\mathbb{E}\|m_{i}\|^{2}$ and $\mathbb{E}\|y_{i+1}-y_{i}\|^{2}=\beta^{2}\mathbb{E}\|n_{i}\|^{2}$.
\end{proof}
	Set  $\mu_{1}=\frac{\epsilon}{\sqrt{35d_{1}^{2}\bar{L}^{2}+576\kappa^{2}d_{1}^{2}d_{2}\bar{L}^{2}}}=\mathcal{O}(\frac{\epsilon}{\kappa d_{1}d_{2}^{\frac{1}{2}}\bar{L}})$,  $\mu_{2}=\frac{\epsilon}{\sqrt{112\kappa^{2}d_{2}^{2}\bar{L}^{2}}}=\mathcal{O}(\frac{\epsilon}{\kappa d_{2}\bar{L}})$, $q=b=\frac{\kappa}{\epsilon}$ and $B=\frac{(40+128\kappa^{2})\sigma^{2}}{\epsilon^{2}}$.
	\begin{theorem}\label{zovrgda_theorem}
		Suppose Assumptions \ref{psi}, \ref{a2} and \ref{vr_lip_assumption} hold. If $\beta\leqslant\frac{1}{C}$ with $C=\bar{L}+30d_{1}\bar{L}+6d_{2}\bar{L}+36d_{1}d_{2}\bar{L}$, $\alpha=\frac{\beta}{16\kappa^{2}}$, we have 
		\begin{align}
			T(\epsilon)\leqslant\frac{8[3\Psi(x_{0})-g(x_0,y_0)-2\Psi^{*} ]}{\alpha\epsilon^{2}}.
		\end{align}
	\end{theorem}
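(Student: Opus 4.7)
The plan is to follow the pattern of Theorem \ref{zoagda_theorem}: start from the one-step descent inequality \eqref{add:6} for the potential $F_t = \tfrac{3}{2}\Psi(x_t) - \tfrac{1}{2}g(x_t, y_t)$, telescope from $t=0$ to $T(\epsilon)-1$, and combine with the definition of $T(\epsilon)$ together with the parameter choices to isolate $\tfrac{\alpha}{4}\mathbb{E}\|\nabla\Psi(x_t)\|^2$ on the left-hand side. The final rearrangement will use $\Psi(x) \geq g(x,y)$ (by definition of $\Psi$) to identify $\min_{x,y}(\tfrac{3}{2}\Psi(x) - \tfrac{1}{2}g(x,y)) \geq \Psi^{*}$, exactly as in the proof of Theorem \ref{zoagda_theorem}.

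The key technical step, new compared to the deterministic case, is handling the inner sums $\sum_{i=(p_t-1)q}^{t-1}$ appearing in \eqref{add:6}, which come from the variance-reduced estimator. Since $p_t = \lceil t/q \rceil$, the inner summation window resets every $q$ iterations, so each index $i$ appears in the inner sum for at most $q$ consecutive outer indices $t$. Swapping summation order I would obtain
\[
\sum_{t=0}^{T(\epsilon)-1} \sum_{i=(p_t-1)q}^{t-1} \mathbb{E}\|m_i\|^2 \;\leq\; q \sum_{t=0}^{T(\epsilon)-1} \mathbb{E}\|m_t\|^2,
\]
and analogously for the $\|n_i\|^2$ sums and for the constant $\mu_1^2$, $\mu_2^2$ residuals. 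Choosing $b=q$ cancels the factor $q/b$ carried by the $\tfrac{1}{b}$ coefficient in \eqref{add:6}.

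After this rearrangement, the aggregated coefficient of $\mathbb{E}\|m_t\|^2$ combines the positive direct piece $\tfrac{\alpha}{2}(1-2\alpha\bar{L})-\tfrac{\beta}{4}\bar{l}^2\alpha^2$ with negative contributions of the orders $d_1\bar{l}^2\alpha^3$, $d_2\bar{l}^2\beta\alpha^2$, and $d_1 d_2\bar{l}^4\beta\alpha^4$, with an analogous expression for $\mathbb{E}\|n_t\|^2$. The constant $C = \bar{L}+30d_1\bar{L}+6d_2\bar{L}+36d_1 d_2\bar{L}$ is engineered precisely so that, under $\beta\leq 1/C$ and $\alpha=\beta/(16\kappa^{2})$, both aggregated coefficients stay non-negative and can be dropped; the same $\alpha$ makes the coefficient $\tfrac{\beta}{8}-2\alpha\kappa^{2}$ of $\mathbb{E}\|\nabla_{y}g(x_t,y_t)\|^2$ exactly zero. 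The remaining residuals involving $\mu_{1}^{2}$, $\mu_{2}^{2}$, and $\sigma^{2}/B$ are then controlled by the prescribed scalings $\mu_{1}=\Theta(\epsilon/(\kappa d_{1}d_{2}^{1/2}\bar{L}))$, $\mu_{2}=\Theta(\epsilon/(\kappa d_{2}\bar{L}))$, and $B=\Theta(\kappa^{2}\sigma^{2}/\epsilon^{2})$, each calibrated so that the per-iteration error is absorbable into $\tfrac{\alpha}{8}\epsilon^{2}$. A standard rearrangement, together with Jensen's inequality $\mathbb{E}\|\nabla\Psi(x_t)\|^{2}\geq(\mathbb{E}\|\nabla\Psi(x_t)\|)^{2}>\epsilon^{2}$ for $t<T(\epsilon)$, then yields the stated complexity bound.

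I expect the main obstacle to be precisely this coefficient-chasing: one must verify that the concrete constant $C$, in conjunction with $\alpha=\beta/(16\kappa^{2})$ and $b=q$, is simultaneously large enough to dominate every one of the several heterogeneous correction terms (of different orders in $\alpha$, $\beta$, $\bar{L}$, $d_{1}$, $d_{2}$) arising from the swapped inner sums and from the Cauchy--Schwarz splittings of $\mathbb{E}\|\nabla_{x}g-m_{t}\|^{2}$ and $\mathbb{E}\|\nabla_{y}g-n_{t}\|^{2}$. This bookkeeping is lengthy but routine, and no genuinely new ideas beyond those already used in the proof of Theorem \ref{zoagda_theorem} and Lemma \ref{vrgradient_bound} are required; it is, however, the only place where a misstep in constants could invalidate the result.
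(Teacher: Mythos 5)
Your proposal follows essentially the same route as the paper's own proof: telescoping the one-step inequality \eqref{add:6}, swapping the double sums generated by the variance-reduced estimator (each index appearing at most $q$ times, with $b=q$ cancelling the $1/b$ factor), verifying that the aggregated coefficients of $\mathbb{E}\|m_t\|^2$ and $\mathbb{E}\|n_t\|^2$ are nonnegative under $\beta\leqslant 1/C$ and $\alpha=\beta/(16\kappa^2)$ (the paper's $H_1,H_2\geqslant 0$ and $\frac{\beta}{8}-2\alpha\kappa^2=0$), calibrating $\mu_1,\mu_2,B$ so the residuals are absorbed into a fraction of $\epsilon^2$, and finishing via $\Psi(x)\geqslant g(x,y)$. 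The plan is correct and even makes explicit the summation-swap and Jensen steps that the paper leaves implicit.
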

	\begin{proof}
		Telescoping and rearranging \eqref{add:6}, by $\bar{l}\leqslant \bar{L}$ and $q=b$, we get
		\begin{align}
			&\nonumber
			F_{0}-\mathbb{E}F_{T(\epsilon)}\\
			\nonumber \geqslant& \frac{\alpha}{4} \sum_{t=0}^{T(\epsilon)-1}\mathbb{E}\|\nabla \Psi(x_{t}) \|^{2}+H_{1}\sum_{t=0}^{T(\epsilon)-1} \mathbb{E}\|m_{t}\|^{2}+H_{2}\sum_{t=0}^{T(\epsilon)-1} \mathbb{E}\|n_{t}\|^{2}\\
			\nonumber&+[\frac{\beta}{8} -2\alpha \kappa^{2}]\sum_{t=0}^{T(\epsilon)-1} \mathbb{E}\|\nabla_{y}g(x_{t}, y_{t}) \|^{2}-[\frac{5\alpha}{2}+\frac{\beta}{2}]\frac{\sigma^{2}}{B}T(\epsilon)- \frac{15\alpha \bar{L}^{2}  d_{1}^{2}}{4} \mu_{1}^{2}T(\epsilon)  \\
			&-\frac{3\beta \bar{L}^{2}  d_{2}^{2}}{4}\mu_{2}^{2}T(\epsilon)- \frac{5\alpha \bar{L}^{2} d_{1}^{2}}{8}\mu_{1}^{2}T(\epsilon)- \frac{\beta \bar{L}^{2} d_{2}^{2}}{8}\mu_{2}^{2}T(\epsilon)- \frac{9\atwo\beta d_{1}^{2}d_{2}\bar{L}^{4}}{2} \mu_{1}^{2}T(\epsilon), \label{add:4.13}
		\end{align}
		where $H_{1}=[\frac{\alpha}{2}(1-2\alpha \bar{L})-\frac{\alpha^{2}\beta \bar{L}^{2}}{4}   -\frac{15\alpha^{3}d_{1} \bar{L}^{2}}{2}-\frac{3\beta d_{2} \bar{L}^{2}}{2}(2\atwo+6\alpha^{4}d_{1}\bar{L}^{2})],H_{2}=[\frac{\beta}{4} (1-\bar{L} \beta)-\frac{15\alpha\beta^{2}d_{1}\bar{L}^{2}}{2}- \frac{3\beta^{3}d_{2}\bar{L}^{2}}{2}-9\atwo\beta^{3}d_{1}d_{2}\bar{L}^{4}]$.
		If $\beta\leqslant\frac{1}{C}$ and $\alpha=\frac{\beta}{16\kappa^{2}}\leqslant\frac{1}{C}$, then $H_{1}\geqslant\frac{\alpha}{2}-\alpha^{2}\bar{L}(\frac{5}{4}+\frac{15}{2}d_{1}+3d_{2}+9d_{1}d_{2})\geqslant0$, $H_{2}\geqslant\frac{\beta}{4}-\beta^{2}\bar{L}(\frac{1}{4}+\frac{15}{2}d_{1}+\frac{3}{2}d_{2}+9d_{1}d_{2})\geqslant0$ and $\frac{\beta}{8} -2\alpha \kappa^{2}=0$.
		Then from \eqref{add:4.13}, we immediately have
		\begin{align}\nonumber
			&F_{0}-\mathbb{E}F_{T(\epsilon)}\\
			\nonumber \geqslant &\frac{\alpha}{4} \sum_{t=0}^{T(\epsilon)-1}\|\nabla \Psi(x_{t}) \|^{2}-(\frac{5}{2}\alpha+8\alpha\kappa^{2})\frac{\sigma^{2}}{B}T(\epsilon)-\frac{35}{8}\alpha \bar{L}^{2} \mu_{1}^{2} d_{1}^{2}T(\epsilon)\\
			&\nonumber -14\alpha\kappa^{2}\bar{L}^{2}\mu_{2}^{2}d_{2}^{2}T(\epsilon)-72\alpha\kappa^{2}\bar{L}^{2}\mu_{1}^{2}d_{1}^{2}d_{2}T(\epsilon).
		\end{align}
		By the definition of $T(\epsilon)$, and the setting of $\mu_{1}$, $\mu_{2}$ and $B$, we have
		\begin{align}
			\nonumber\epsilon^{2}&\nonumber \leqslant\frac{4}{\alpha T(\epsilon)}\left[F_{0}-\min _{x, y} \left(\frac{3}{2} \Psi(x)-\frac{1}{2} g(x, y)\right)\right]+\frac{3}{4}\epsilon^{2}\\
			&\nonumber \leqslant\frac{4}{\alpha T(\epsilon)}[F_{0}-\Psi^{*} ]+\frac{3}{4}\epsilon^{2}\\
			& \leqslant\frac{2}{\alpha T(\epsilon)}[3\Psi(x_{0})-g(x_0,y_0)-2\Psi^{*} ]+\frac{3}{4}\epsilon^{2},\label{add:7}
		\end{align}
		where the second inequality is due to $\Psi(x)\geqslant g(x,y)$ by the definition of $\Psi(x)$. The proof is completed by \eqref{add:7}.
	\end{proof}
	
	By choosing $\beta=\frac{1}{C}$ in Theorem \ref{zovrgda_theorem}, we can easily compute that $T(\epsilon)=\mathcal{O}(\kappa^{2}d_{1}d_{2}\bar{L}\epsilon^{-2})$ which implies  the total number of function value queries are $(\frac{4B}{q}+\frac{(q-1)8b}{q})*T(\epsilon)=\mathcal{O}(\kappa^{3}d_1d_2\bar{L}\epsilon^{-3})$ by the choices of $b=\frac{\kappa}{\epsilon}$, $B=\frac{(40+128\kappa^{2})\sigma^{2}}{\epsilon^{2}}$ and $q=\frac{\kappa}{\epsilon}$ for Algorithm \ref{algo2} to find an $\epsilon$-stationary point for \eqref{min_max_problem_stochastic}.
	\section{Numerical Experiments}
	In this section, we consider a stochastic version of Algorithm~\ref{algo1}  where $\hat{\nabla}_{x} f(x_{t}, y_{t})$ and $\hat{\nabla}_{y} f(x_{t+1}, y_{t})$ are replaced by $\hat{\nabla}_{x} G(x_{t}, y_{t} ; \mathcal{B}_{t})$ and $\hat{\nabla}_{y} G(x_{t+1}, y_{t}; \bar{\mathcal{B}}_{t})$ with $\mathcal{B}_{t}=\{\xi_{t}^{i}\}_{i=1}^{B}$ and $\bar{\mathcal{B}}_{t}=\{\zeta_{t}^{i}\}_{i=1}^{B}$ respectively. We denote it as ZO-SAGDA algorithm.
	We test two numerical experiments to show the efficiency of ZO-SAGDA algorithm and ZO-VRAGDA algorithm for solving a Wasserstein GAN problem and a robust polynomial optimization problem.
	
	\subsection{Wasserstein GAN problem}
	In this section, we first consider the following WGAN problem~\citep{arjovsky2017wasserstein},
	\begin{align*}
		\min_{\varphi_1,\varphi_2}\max_{\phi_1,\phi_2} & \ f(\varphi_1,\varphi_2, \phi_1, \phi_2)  \triangleq \mathbb{E}_{(x^{real},z)\sim \mathcal{D}}(D_{\phi}(x^{real})-D_{\phi}(G_{\varphi_1,\varphi_2}(z))) - \lambda \|\phi\|^2,
	\end{align*}
	where  $G_{\varphi_1,\varphi_2}(z) = \varphi_1 + \varphi_2 z$, $D_{\phi}(x) =\phi_1 x + \phi_2 x^2$, $\phi=(\phi_1,\phi_2)$, $x^{real}$ is generated from a normal distribution with mean $\varphi_1^{*}=0$ and variance $\varphi_2^{*}=0.1$ which are also the optimal solutions, and variable $z$ is generated from a normal distribution with mean $0$ and variance $1$. Set $\lambda = 0.001$ which is the same as that in \citep{yang22b}.
	\begin{figure}[t]
		\centering
		\includegraphics[width=350pt]{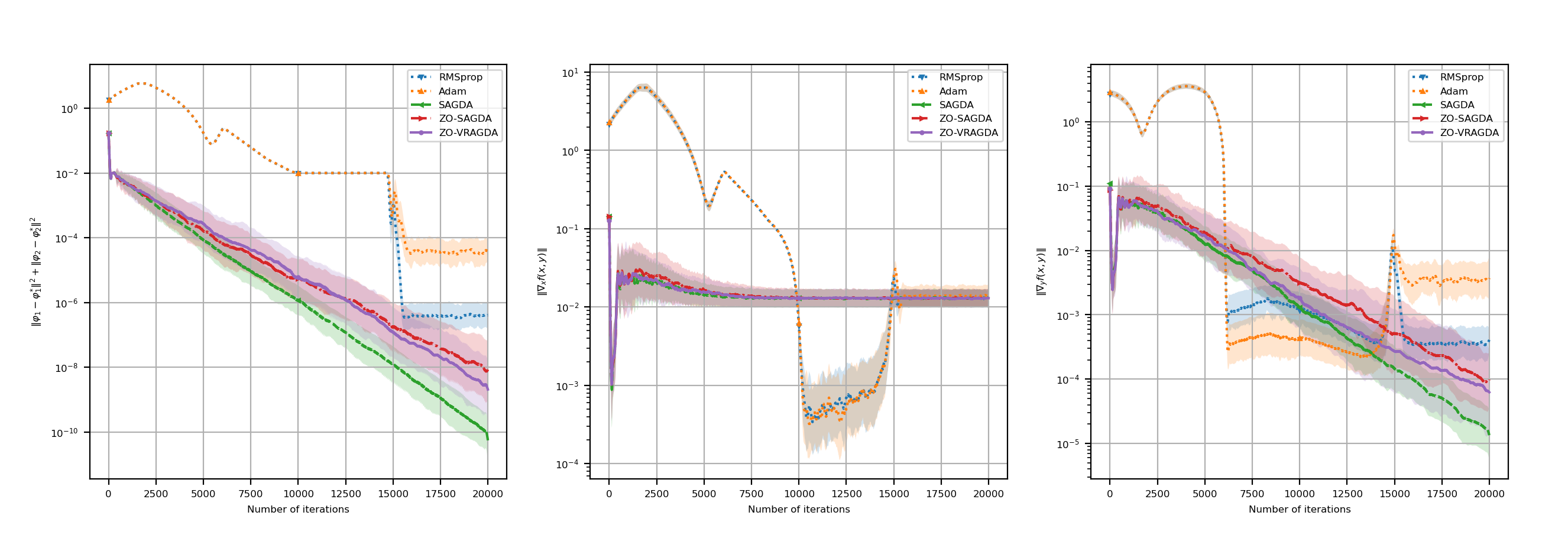}
		\caption{Performance of five tested algorithms for solving WGAN problem.}\label{img_compare}
	\end{figure}
	
	We compare ZO-SAGDA algorithm and ZO-VRAGDA algorithm with three first-order algorithms, i.e., SAGDA~\citep{yang22b}, Adam~\citep{kingma2014adam} and RMSprop~\citep{tieleman2012lecture}. Set $B=100$ for all five tested algorithms. We set $q=b=10$, $\alpha=0.1$, $\beta=0.5$ in ZO-VRAGDA, $\alpha=0.1$, $\beta=0.5$ in ZO-SAGDA and $\tau_{1}=0.1$, $\tau_{2}=0.5$ in SAGDA respectively. All the parameters of Adam algorithm and RMSprop algorithm are chosen the same as that in \citep{kingma2014adam} and \citep{tieleman2012lecture} respectively. 
	
	Figure \ref{img_compare} shows the average distance between $\varphi$ and $\varphi^{*}$, the average change of the gradient of the objective function with respect to $x$ and $y$ respectively of all the five test algorithms as the number of iterations changes over 10 independent runs. The shaded part around 5 curves denotes the standard deviation. From Figure \ref{img_compare}, we can find that both the proposed ZO-SAGDA algorithm and the ZO-VRAGDA algorithm outperform Adam algorithm and RMSprop algorithm, and approximate the performance of SAGDA algorithm which is a first-order GDA algorithm. 
	
	\subsection{Robust Polynomial Optimization Problem}
	We consider the following robust polynomial optimization problem \citep{dimitris2010robust},
	\begin{align}
		&\underset{\mathbf{x} \in \mathcal{C}}{\operatorname{max}}\  \underset{\|\boldsymbol{y}\|_2 \leq 0.5}{\operatorname{min}} f(\mathbf{x},\boldsymbol{y}):=-2\left(x_1-y_1\right)^6+12.2\left(x_1-y_1\right)^5-21.2\left(x_1-y_1\right)^4 \nonumber\\
		&-6.2\left(x_1-y_1\right)+6.4\left(x_1-y_1\right)^3+4.7\left(x_1-y_1\right)^2-\left(x_2-y_2\right)^6 \nonumber\\
		&+11\left(x_2-y_2\right)^5-43.3\left(x_2-y_2\right)^4+10\left(x_2-y_2\right)+74.8\left(x_2-y_2\right)^3 \nonumber\\
		&-56.9\left(x_2-y_2\right)^2+4.1\left(x_1-y_1\right)\left(x_2-y_2\right)+0.1\left(x_1-y_1\right)^2\left(x_2-y_2\right)^2 \nonumber\\
		&-0.4\left(x_2-y_2\right)^2\left(x_1-y_1\right)-0.4\left(x_1-y_1\right)^2\left(x_2-y_2\right),
	\end{align}
	where $\mathcal{C}=\{x_1 \in (-0.95,3.2), x_2 \in (-0.45,4.4)\}$.
	
	We use the following regret function versus iteration $t$ to measure the quality of the solution obtained by three tested algorithms, which is also used in~\citep{liu2020min}, i.e.,
	\begin{align}
		\operatorname{Regret}(t)=\underset{\|\boldsymbol{y}\|_2 \leq 0.5}{\operatorname{min}} f\left(\mathbf{x}^*,\boldsymbol{y}\right)-\underset{\|\boldsymbol{y}\|_2 \leq 0.5}{\operatorname{min}} f(\mathbf{x}^{(t)},\boldsymbol{y}),
	\end{align}
	where $\mathbf{x}^{(t)}$ is the $t$th iteration point generated by the tested algorithm, $\mathbf{x}^*=[-0.195,0.284]^T$ and $\min_{\|\boldsymbol{y}\|_2 \leq 0.5} f(\mathbf{x}^*,\boldsymbol{y})=-4.33$.
	
	\begin{figure}[t]
		\centering
		\subfigure{\includegraphics[width=150pt]{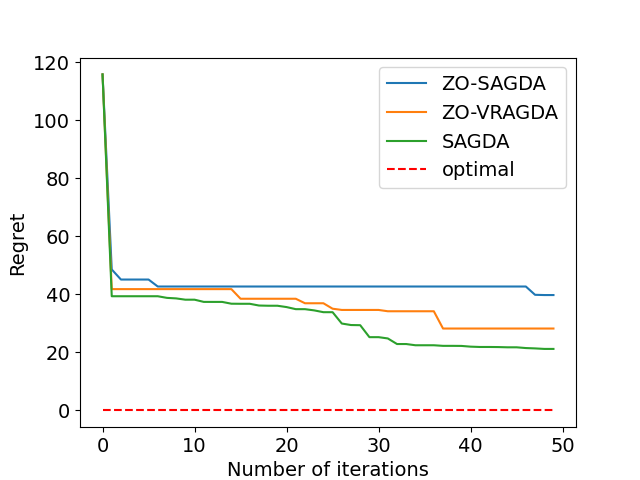}}
		\subfigure{\includegraphics[width=150pt]{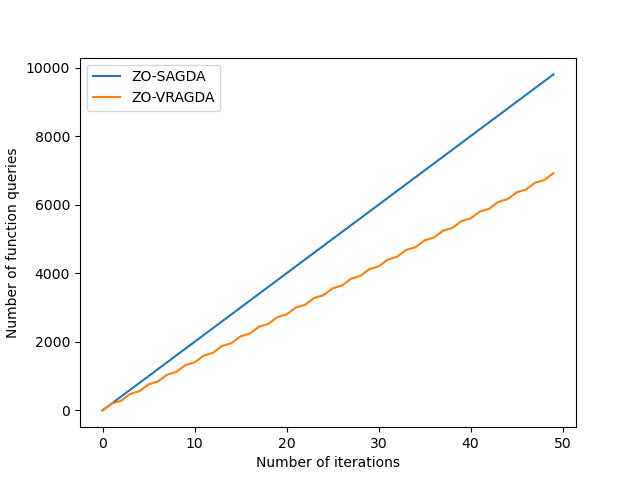}}
		\caption{Performance of three algorithms for solving robust polynomial optimization problem.}
		\label{figs}
	\end{figure}
	
	We compare ZO-SAGDA algorithm and ZO-VRAGDA algorithm  with the SAGDA algorithm~\citep{yang22b}, which is a first-order algorithm.  Set $B=50$ for all three tested algorithms. We set $q=2$, $b=10$, $\alpha=0.1$, $\beta=0.1$ in ZO-VRAGDA, $\alpha=0.1$, $\beta=0.1$ in ZO-SAGDA and $\tau_{1}=0.1$, $\tau_{2}=0.1$ in SAGDA respectively. Note that, for ZO-SAGDA algorithm and ZO-VRAGDA algorithm, instead of the exact function value, we use the noisy function value with an additional normal distribution random noise with mean $0$ and variance $0.5$ at each iteration. 
	
	Figure \ref{figs} shows the average minimum achieved regret function value and the number of function queries up to the $t$th iteration over 5 independent runs, which is the same as that used in~\citep{liu2020min}.  From Figure \ref{figs}, we can find that the performance of ZO-SAGDA algorithm and ZO-VRAGDA algorithm is similar to that of SAGDA algorithm, which is a first order algorithm. Moreover, the number of function value computation of ZO-VRAGDA algorithm is much less than that of ZO-SAGDA algorithm at each iteration. 
	
	\section{Conclusions and Discussions}
	In this paper, we propose a zeroth-order alternating gradient descent ascent (ZO-AGDA) algorithm and a zeroth-order variance reduced alternating gradient descent ascent (ZO-VRAGDA) algorithm  for solving a class of nonconvex-nonconcave minimax problems, i.e., NC-PL minimax problem, under the deterministic and the stochastic setting respectively. The total number of function value queries to obtain an $\epsilon$-stationary point of ZO-AGDA and ZO-VRAGDA algorithm for solving NC-PL minimax problem is upper bounded by $\mathcal{O}(\varepsilon^{-2})$ and $\mathcal{O}(\varepsilon^{-3})$ respectively, both of which match the iteration complexity of the corresponding first-order algorithms. To the best of our knowledge, ZO-AGDA and ZO-VRAGDA are the first two zeroth-order algorithms with the complexity gurantee for solving NC-PL minimax problems. Our numerical results further demonstrate the efficiency of the proposed algorithms which approximate the performance of the corresponding first-order algorithms.
	
	Furthermore, note that there is another zeroth-order gradient estimator based on Gaussian smoothing technique that proposed in \citep{yurii17random} which can be used to estimate the true gradient similar as that in \eqref{sec2:1} and \eqref{sec2:2}. For the corresponding algorithms, we also can obtain the same total complexity result with Theorem \ref{zoagda_theorem} and Theorem \ref{zovrgda_theorem} that shown in Section 3 and Section 4 respectively. For the brevity of the article, we omit the detailed proofs. 
	
	For more general nonconvex-nonconcave minimax problems that the PL conditions are not satisfied, it is worthy of further in-depth study whether the iteration complexity of the proposed algorithm can be guaranteed or not.

\vskip 0.2in
\bibliography{zoncpl}

\end{document}